\documentclass{amsart}
\usepackage{amssymb}
\usepackage{bbold}
\usepackage{xcolor}
\usepackage[colorlinks=true,
linkcolor=black,
urlcolor=blue,
citecolor=black]{hyperref}
\usepackage[margin=1.0in]{geometry}
\usepackage{enumitem}
\usepackage{comment}
\usepackage[T1]{fontenc}
\allowdisplaybreaks
\theoremstyle{definition}
\newtheorem{theorem}{Theorem}[section]

\newtheorem{proposition}[theorem]{Proposition}
\newtheorem{lemma}[theorem]{Lemma}
\newtheorem{example}[theorem]{Example}
\newtheorem{rem}[theorem]{Remark}

\DeclareMathOperator{\Prep}{Prep}
\DeclareMathOperator{\lc}{lc}
\newcommand{\F}{\mathbb{F}}
\newcommand{\PP}{\mathcal{P}}
\newcommand{\hh}{\widehat h}
\begin{document}

\title[Simultaneously preperiodic points in characteristic $p$]{Simultaneously preperiodic points and colliding orbits for families of polynomials in characteristic $p$}
\author{Jungin Lee and GyeongHyeon Nam}
\date{}
\address{J. Lee -- Department of Mathematics, Ajou University, Suwon 16499, Republic of Korea \newline
\indent G. Nam -- Korea Institute for Advanced Study, 85 Hoegi-ro, Dongdaemun-gu, Seoul 02455, Republic of Korea}
\email{jileemath@ajou.ac.kr, ghnam@kias.re.kr}

\begin{abstract}
Let $k=\overline{\mathbb{F}}_p$ and let $f\in k[X]$ be a polynomial of degree at least $2$ such that $f-f(0)$ is non-additive. For $\alpha,\beta$ in an algebraically closed extension $L$ of $k$, we prove that there are infinitely many $\lambda\in L$ for which both points are preperiodic under $f(X)+\lambda$ if and only if $\alpha,\beta\in k$ or $f(\beta)-f(\alpha)\in E_f$. Here $E_f=\{c\in G_f:(f-f(0))^n(c)\in G_f\text{ for every }n\ge0\}$, where $G_f=\{c\in k : f(X+c)-f(X)\in k\}$. We also provide a necessary and sufficient criterion for the corresponding colliding orbits set to be infinite.
\end{abstract}
\maketitle

\vspace{-5mm}
\section{Introduction}\label{Sec1}

Let $L$ be an algebraically closed field of characteristic $p>0$, and let $k$ be the algebraic closure of the prime field $\F_p$ in $L$. We call elements of $k$ \emph{constants}. For a polynomial $\varphi$, write $\varphi^n$ for its $n$-th iterate with $\varphi^0(x)=x$. A point $x$ is \emph{preperiodic} under $\varphi$ if its forward orbit $\{\varphi^n(x):n\ge0\}$ is finite. For $f\in k[X]$ of degree $d\ge2$ and $\alpha,\beta\in L$, set
\begin{equation*}
f_\lambda(X):=f(X)+\lambda,\quad
\PP_f(\alpha):=\{\lambda\in L:\alpha\text{ is preperiodic under }f_\lambda\},
\end{equation*}
\begin{equation*}
\Prep(f;\alpha,\beta):=\PP_f(\alpha)\cap\PP_f(\beta).
\end{equation*}
We call a polynomial $f$ \emph{additive} if $f(X+Y)=f(X)+f(Y)$ as polynomials in $X$ and $Y$; in characteristic $p$, these are precisely the polynomials of the form $f(X)=\sum_i a_iX^{p^i}$.

For $f \in k[X]$ and an indeterminate $Y$, write
\begin{equation*}
D_Yf(X):=f(X+Y)-f(X)\in k[X,Y],
\end{equation*}
and let $D_cf(X)$ denote its specialization at $Y=c$ for $c \in L$.
Define
\begin{equation*}
\begin{aligned}
G_f&:=\{c\in k:D_cf(X)\in k\},\\
\Delta_f(c)&:=D_cf(X)=f(c)-f(0)\quad(c\in G_f),\\
E_f&:=\{c\in G_f:\Delta_f^n(c)\in G_f\text{ for every }n\ge0\}.
\end{aligned}
\end{equation*}
The last condition means that every iterate of the partially defined map $\Delta_f:G_f\to k$ remains in $G_f$. Every such orbit is finite since $c$ and the coefficients of $f$ are contained in a finite field. For a non-additive polynomial $f$ with $f(0)=0$, both $G_f$ and $E_f$ are finite $\F_p$-vector spaces; see Proposition \ref{prop35}.

Ghioca \cite[Theorem 1.1]{Ghi26} proved that, for $f(X)=X^d$ with $d$ not a power of $p$, the set $\Prep(f;\alpha,\beta)$ is infinite if and only if $\alpha,\beta\in k$ or $f(\alpha)=f(\beta)$. If $d$ is a power of $p$, i.e., $f$ is additive, then $\Prep(f;\alpha,\beta)$ is infinite if and only if $\beta-\alpha\in k$. For $f(X)=\sum_{i=1}^r c_iX^{d_i} \in k[X]$ with $c_1, \ldots, c_r \ne0$ and $1\le d_1<\cdots<d_r=d$, set $q_i:=p^{v_p(d_i)}$ and $A_i:=d_i-q_i$, where $v_p(m)$ denotes the exponent of $p$ in a positive integer $m$. In this setting, Ghioca and Hsia \cite[Theorem 1.1]{GH26} proved that $\Prep(f;\alpha,\beta)$ is infinite if and only if $\alpha,\beta\in k$ or $f(\alpha)=f(\beta)$ whenever $d\ge3$ and
\begin{equation*}
A_r>\max(1,A_1,\ldots,A_{r-1}).
\end{equation*}
Recently, the authors proved this criterion for binomials with $A_1>A_2$ \cite[Theorem 1.4]{LN25}. In the remaining case $A_1=A_2>0$, we obtained a weaker necessary condition allowing $f(\beta)-f(\alpha)$ to be a nonzero constant \cite[Theorem 1.5]{LN25}. In this paper, we extend these results to every non-additive polynomial, with the condition $f(\alpha)=f(\beta)$ replaced by $f(\beta)-f(\alpha)\in E_f$, as follows.

\begin{theorem}\label{thm11}
Let $f\in k[X]$ be a polynomial of degree at least $2$ such that $f-f(0)$ is non-additive. For $\alpha,\beta\in L$, the following conditions are equivalent:
\begin{enumerate}[label=(\roman*)]
\item $\Prep(f;\alpha,\beta)$ is infinite;
\item $\PP_f(\alpha)=\PP_f(\beta)$;
\item $\alpha,\beta\in k$ or $f(\beta)-f(\alpha)\in E_f$.
\end{enumerate}
\end{theorem}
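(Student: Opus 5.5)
We will prove the cycle (iii)$\Rightarrow$(ii)$\Rightarrow$(i)$\Rightarrow$(iii).

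\textbf{(ii)$\Rightarrow$(i).} Note first that $\PP_f(\alpha)$ is infinite for every $\alpha\in L$. The condition $f_\lambda^n(\alpha)=f_\lambda^m(\alpha)$ with $n>m$ is a nonconstant polynomial equation in $\lambda$, of degree $d^{n-1}$ in the leading term, so it has roots. Taking infinitely many pairs $(n,m)$ gives infinitely many distinct $\lambda$ by degree counting. This is the standard argument, and (ii) then immediately gives (i).

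\textbf{(iii)$\Rightarrow$(ii).} If $\alpha,\beta\in k$, then for $\lambda\in L$ the point $\alpha$ is preperiodic under $f_\lambda$ iff $\lambda\in k$. Indeed, if $\lambda\in k$ the orbit lies in a finite field. Conversely, the height (degree in a transcendence basis, i.e.\ the canonical height over a function field) of $f^n_\lambda(\alpha)$ grows like $d^{n}\deg\lambda$ when $\lambda\notin k$. Hence $\PP_f(\alpha)=k=\PP_f(\beta)$.

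Now suppose $c:=f(\beta)-f(\alpha)\in E_f$. Put $x_n=f_\lambda^n(\alpha)$ and $y_n=f_\lambda^n(\beta)$, so that $y_1-x_1=c\in G_f$. Since $c\in G_f$, we have $f(X+c)=f(X)+\Delta_f(c)$, hence
\[
y_2-x_2=f(x_1+c)-f(x_1)=\Delta_f(c).
\]
Inductively,
\[
y_{n+1}-x_{n+1}=\Delta_f^n(c)\in G_f,
\]
and these differences take only finitely many values in $k$. So the orbit of $\beta$ lies within finitely many constant translates of the orbit of $\alpha$ (from index $1$ on), and the translates are bounded. Hence one orbit is finite iff the other is, giving $\PP_f(\alpha)=\PP_f(\beta)$.

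\textbf{(i)$\Rightarrow$(iii)} is the heart of the proof. Assume $\Prep(f;\alpha,\beta)$ is infinite and that not both of $\alpha,\beta$ lie in $k$. Following Ghioca and Ghioca--Hsia, we use heights over the function field $K=k(\alpha,\beta)$ together with equidistribution/unlikely-intersection arguments in the form available in characteristic $p$.

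\emph{Step 1 (equal canonical heights).} View $\lambda$ as a parameter. The sets $\PP_f(\alpha)$ and $\PP_f(\beta)$ are the zero sets of the height functions $\lambda\mapsto\hh_{f_\lambda}(\alpha)$ and $\lambda\mapsto\hh_{f_\lambda}(\beta)$ on $\mathbb{A}^1$. For a transcendental parameter $\lambda=t$ we have
\[
\hh_{f_t}(\alpha)=\lim_n d^{-n}\deg_t f_t^n(\alpha),
\]
computed place by place. Infinitely many common preperiodic parameters, via arithmetic equidistribution on Berkovich spaces at every place of $K$ (as in the cited works), force the local Green functions $g_{v,\alpha}(\lambda)$ and $g_{v,\beta}(\lambda)$ to coincide for every place $v$. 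Here
\[
g_{v,\alpha}(\lambda)=\lim_n d^{-n}\log|f_\lambda^n(\alpha)|_v .
\]

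\emph{Step 2 (exact equality of iterates up to a constant).} Expanding the B\"ottcher-type asymptotics at $\lambda\to\infty$, we compare $f_\lambda^n(\alpha)$ and $f_\lambda^n(\beta)$ as Puiseux series in $\lambda^{-1}$. Equality of all local Green functions should imply that $f^n_\lambda(\beta)-f^n_\lambda(\alpha)$ is bounded at every place, uniformly in $n$. At each place $v$ where $\alpha$ or $\beta$ is non-integral, this boundedness forces $f(\beta)-f(\alpha)$ to be a $v$-adic unit or zero. Doing this at all places, the product formula then gives $c_1:=f(\beta)-f(\alpha)\in k$.

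\emph{Step 3 (constant propagation).} Inductively, the difference
\[
c_{n+1}=f(x_n+c_n)-f(x_n)=D_{c_n}f(x_n)
\]
must remain bounded at all places for infinitely many relevant $\lambda$. Since $x_n$ is transcendental over $k$ with growing degree, a polynomial $D_{c_n}f(X)$ taking $v$-bounded values must be constant. Thus $c_n\in G_f$ and $c_{n+1}=\Delta_f(c_n)$ for all $n$, i.e.\ $c_1\in E_f$.

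Non-additivity of $f-f(0)$ is what ensures that $G_f$ is a finite $\F_p$-space (Proposition \ref{prop35}), so $D_cf$ is genuinely nonconstant for $c\notin G_f$. The additive case would allow every constant $c$.

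\textbf{Main obstacle.} The main obstacle is Step 2. Transferring equality of canonical heights to the statement that $f(\beta)-f(\alpha)$ is constant requires careful local analysis of the Puiseux expansions of $f_\lambda^n(\alpha)$ at $\lambda=\infty$ for general non-additive $f$, where cancellation between monomials of $p$-power degree complicates the leading terms. I would try to handle this by writing $f=A+B$ with $A$ the additive part and $B$ the non-additive part, and tracking the first non-additive term.
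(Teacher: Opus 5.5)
\textbf{Verdict: genuine gap.} Your directions (iii)$\Rightarrow$(ii)$\Rightarrow$(i) agree with the paper. One caveat: for the infinitude of $\PP_f(\alpha)$, ``degree counting'' alone is not enough, since a priori all roots could lie in one finite set with large multiplicities. You need a gcd bound as in Theorem~\ref{thm43}, or simply cite Lemma~\ref{lem21}. Step~1 correctly invokes the local-height equality \eqref{eq2c}. The gap is that Steps~2 and~3, which carry all the content of (i)$\Rightarrow$(iii), are heuristics rather than arguments.

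\textbf{Step 2.} The claim that \eqref{eq2c} makes $f_\lambda^n(\beta)-f_\lambda^n(\alpha)$ bounded uniformly in $n$ cannot be read off from \eqref{eq2c}. For escaping $\lambda$, equal local heights only force $|f_\lambda^n(\alpha)|_v=|f_\lambda^n(\beta)|_v$ for large $n$, not smallness of the difference. The boundedness is known only a posteriori, once $e\in E_f$ is established. What \eqref{eq2c} actually gives is much weaker:
\begin{itemize}
\item By Lemma~\ref{lem24}, the sets $\{\lambda:|f_\lambda^n(\alpha)|_v\le C_v\}$ and $\{\lambda:|f_\lambda^n(\beta)|_v\le C_v\}$ coincide.
\item Lemma~\ref{lem23} then bounds only the leading coefficient: $|\lc_T P_n|_v\le C_v$ for $P_n=f_T^n(\beta)-f_T^n(\alpha)$.
\item If $|e|_v>1$, the recurrence $|\lc_T P_{n+1}|_v=|\lc_T P_n|_v^{Q_n}$ of Lemma~\ref{lem26} turns this into the exponent constraint \eqref{eq2g}. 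That is \emph{not} yet a contradiction.
\end{itemize}
The paper needs a further idea (Lemma~\ref{lem31}). Take the $d$ parameters $\lambda_j=x_j-f(\alpha)$ with $f(x_j)=f(\alpha)$, so that $\alpha$ lands on the fixed point $x_j$. Boundedness of the orbit of $\beta$ gives $|D_ef(x_j)|_v\le R=C_v^{\sigma}$, and \eqref{eq2g} places the $x_j$ pairwise at distance $R$. Lagrange interpolation of $D_ef$ (degree $M$, leading coefficient $c_\kappa s_\kappa e$) then yields $|e|_v\le R^{1-M}\le 1$. Your suggestion of splitting $f$ into additive and non-additive parts supplies neither the leading-coefficient mechanism nor this interpolation argument.

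\textbf{Step 3} fails as stated. You do not say which parameters, or which bound, produce the boundedness. Moreover, for preperiodic $\lambda$ the orbit of $\alpha$ is finite, so there is no ``growing degree.'' The bound actually available has size $C_v=|\alpha|_v$: the orbit of $\beta$ under $f_{\lambda_\alpha}$, with $\lambda_\alpha=\alpha-f(\alpha)$, stays in $|z|_v\le C_v$. Writing $c$ for the first difference not in $G_f$, one has $f_{\lambda_\alpha}^{n}(\beta)=\alpha+c$ and $f_{\lambda_\alpha}^{n+1}(\beta)=\alpha+D_cf(\alpha)$ for suitable $n$. Since $|H(\alpha)|_v=C_v^{\deg H}$ for nonzero $H\in k[X]$, this only yields $\deg D_cf\le 1$. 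Affine nonconstant $D_cf$ really occurs for non-additive $f$: for $p=3$ and $f=X^2$, $D_cf=2cX+c^2$. So ``bounded values force a constant'' is false here. The paper goes one iterate further, using $f_{\lambda_\alpha}^{n+2}(\beta)=\alpha+R_c(\alpha)$ with $R_c(X)=f(X+D_cf(X))-f(X)$, to get $\deg R_c\le 1$. The other fixed-point parameter $\lambda_\beta=\beta-f(\beta)$ gives $\deg R_{-c}\le1$. Lemma~\ref{lem33}, which commutes $D_c$ with $D_{tc}$ to force $(1+a)^2=(1-a)^2=1$, then rules out the affine case.
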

Consequently, if $E_f=\{0\}$, then
\begin{equation}\label{eq1a}
|\Prep(f;\alpha,\beta)|=\infty
\quad\Leftrightarrow\quad
\alpha,\beta\in k\ \text{ or }\ f(\alpha)=f(\beta).
\end{equation}

The following example illustrates why the set $E_f$ is introduced.
\begin{example}\label{exmp12}
Let $p=2$ and $f(X)=X^6+X^5+X^3 \in k[X]$. For $c\in k$, a direct computation gives
\begin{equation*}
D_cf(X)=(c^2+c)X^4+(c^4+c)X^2+(c^4+c^2)X+f(c).
\end{equation*}
The coefficient of $X^4$ forces $c\in\F_2$ whenever $c\in G_f$. For these two values, $D_cf(X)=c$, so $G_f=E_f=\F_2$. Now let $L=\overline{k(t)}$, where $t$ is transcendental over $k$. Then $f(t+1)-f(t)=1$ so
\begin{equation*}
f_\lambda^n(t+1)-f_\lambda^n(t)=1
\end{equation*}
for every $n \ge 0$ and $\lambda \in L$. 
Hence $\PP_f(t)=\PP_f(t+1)$, and Lemma \ref{lem21} gives $|\Prep(f;t,t+1)|=\infty$, although $f(t)\ne f(t+1)$. Thus the equality criterion \eqref{eq1a} does not hold for arbitrary non-additive polynomials.
\end{example}

For a nonzero polynomial $H(Z)=\sum_{j=0}^m h_jZ^j$ with $h_m\ne0$, the symbols $\deg_ZH$ and $\lc_ZH$ denote its degree $m$ and the leading coefficient $h_m$, respectively. We omit the subscript when the variable is clear. We use $H'(Z)$ for the formal derivative of $H$ with respect to $Z$. We also write $f_T(X):=f(X)+T$ for an indeterminate parameter $T$.

Assume that $\Prep(f;\alpha,\beta)$ is infinite and that $\alpha,\beta$ are not both in $k$. The proof uses the local-height equality of Ghioca and Hsia, in the form recorded in \cite[Theorem 2.1]{LN25}. In Section \ref{Sec2}, we develop the tools needed for the proof of our main result. In Subsection \ref{Sub31}, we use the two fixed-point parameters $\alpha-f(\alpha)$ and $\beta-f(\beta)$ to prove that $f(\beta)-f(\alpha)\in E_f$, and then prove Theorem \ref{thm11}. Subsection \ref{Sub32} gives an effective procedure for computing $E_f$. Following the work of Asgarli and Ghioca \cite{AG26}, we study the colliding orbits problem for arbitrary non-additive polynomials in Section \ref{Sec4}; see Theorem \ref{thm41} for the main result.

\section{Height and degree estimates}\label{Sec2}

\subsection{Normalization and canonical heights}\label{Sub21}

For a polynomial $f\in k[X]$ of degree $d$ with leading coefficient $a_d$, choose $a\in k^*$ with $a^{d-1}=a_d$ and set
\begin{equation*}
g(X):=a(f(X/a)-f(0)),\quad \mu_a(X):=aX.
\end{equation*}
Then $g$ is monic, $g(0)=0$, and
\begin{equation}\label{eq2a}
\begin{aligned}
\mu_a\circ(f+\lambda)\circ\mu_a^{-1}&=g+a(\lambda+f(0)),\\
g(a\beta)-g(a\alpha)&=a(f(\beta)-f(\alpha)),\\
D_{ac}g(X)&=aD_cf(X/a).
\end{aligned}
\end{equation}
With the notation introduced above, the identities in \eqref{eq2a} give
\begin{equation*}
G_g=aG_f,\quad \Delta_g(ac)=a\Delta_f(c),\quad E_g=aE_f.
\end{equation*}
After applying \eqref{eq2a} and replacing $g$ with $f$, we may assume that $f$ is monic and $f(0)=0$ in the proof of Theorem \ref{thm11}. These normalizations remain in force until the end of Subsection \ref{Sub31}.

\begin{lemma}\label{lem21}
(\cite[Proposition 2.3 and the proof of Proposition 4.4]{GH26}) For every $\alpha\in L$, the set $\PP_f(\alpha)$ is infinite. If $\alpha\in k$, then $\PP_f(\alpha)=k$.
\end{lemma}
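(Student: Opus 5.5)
The plan is to obtain infinitude of $\PP_f(\alpha)$ from the fact that preperiodicity of $\alpha$ is cut out by polynomial equations in $\lambda$ that have solutions. It is enough to show that $\PP_f(\alpha)$ contains infinitely many elements. For each pair of integers $m>n\ge0$, consider
\begin{equation*}
P_{m,n}(T):=f_T^m(\alpha)-f_T^n(\alpha)\in L[T].
\end{equation*}
Any root $\lambda\in L$ of $P_{m,n}$ satisfies $f_\lambda^m(\alpha)=f_\lambda^n(\alpha)$, so $\alpha$ is preperiodic under $f_\lambda$. The first step is to compute the degree of $f_T^n(\alpha)$ in $T$. By induction, using the normalization that $f$ is monic of degree $d$, this degree is $d^{n-1}$ for $n\ge1$. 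Indeed, $f_T(\alpha)=f(\alpha)+T$ has degree $1$, and $f_T^{n+1}(\alpha)=f(f_T^n(\alpha))+T$ has degree $d\cdot d^{n-1}=d^n$ when $n\ge1$, because $d\ge2$. Consequently $P_{m,n}$ is nonconstant of degree $d^{m-1}$, and since $L$ is algebraically closed it has roots.

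The main obstacle is showing that these roots produce infinitely many distinct parameters. A fixed $\lambda$ might a priori account for roots of many $P_{m,n}$. I would argue by contradiction. Suppose $\PP_f(\alpha)=\{\lambda_1,\dots,\lambda_s\}$ is finite. Each $\lambda_i$ has a minimal preperiod $n_i$ and period $r_i$. Choose $n> \max n_i$ and a period length $m-n=N$ that is a prime larger than every $r_i$; the condition $r_i\mid N$ then fails for all $i$. Now look at the polynomial
\begin{equation*}
Q(T)=f_T^{n+N}(\alpha)-f_T^{n}(\alpha).
\end{equation*}
Every root of $Q$ lies in $\PP_f(\alpha)$. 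For $\lambda_i$, however, the orbit point $f_{\lambda_i}^n(\alpha)$ is periodic with exact period $r_i\nmid N$, so $f_{\lambda_i}^{n+N}(\alpha)\ne f_{\lambda_i}^n(\alpha)$. Thus $\lambda_i$ is not a root of $Q$. But $Q$ has degree $d^{n+N-1}\ge1$ and so has a root, which is a contradiction. This forces $\PP_f(\alpha)$ to be infinite. A cleaner alternative is to use the canonical-height description of Ghioca--Hsia, but the counting argument above avoids it.

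For the second assertion, suppose $\alpha\in k$. If $\lambda\in k$, then $\alpha$, $\lambda$ and the coefficients of $f$ all lie in some finite field $\F_q$. The orbit of $\alpha$ under $f_\lambda$ stays in $\F_q$, hence is finite, and so $k\subseteq\PP_f(\alpha)$. For the converse, suppose $\lambda\notin k$. Then $\lambda$ is transcendental over $k$, because $k$ is algebraically closed in $L$. Fix a place $v$ of $k(\lambda)$ (extended to $\overline{k(\lambda)}$) with $|\lambda|_v>1$; this place is trivial on $k$, so $|\alpha|_v\le1$. By the ultrametric inequality, $|f_\lambda(\alpha)|_v=|\lambda|_v>1$, since $|f(\alpha)|_v\le1$. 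Inductively, if $|x|_v>1$ and $|x|_v^d\ge|\lambda|_v$, then $|f_\lambda(x)|_v=|x|_v^d$, because $f$ is monic. Starting from $x=f_\lambda(\alpha)$ with $|x|_v=|\lambda|_v$, we have $|x|_v^d>|\lambda|_v$, so the induction applies and the absolute values $|f_\lambda^n(\alpha)|_v=|\lambda|_v^{d^{n-1}}$ grow without bound. Hence the orbit of $\alpha$ is infinite and $\lambda\notin\PP_f(\alpha)$, which gives $\PP_f(\alpha)=k$.
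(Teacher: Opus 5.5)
\textbf{The first assertion has a genuine gap.} You claim that ``$r_i\mid N$ then fails for all $i$.'' This is false whenever $r_i=1$, since $1$ divides every $N$. Moreover, $r_i=1$ always occurs: the parameter $\lambda_\alpha:=\alpha-f(\alpha)$ satisfies $f_{\lambda_\alpha}(\alpha)=\alpha$. Hence $Q(\lambda_\alpha)=0$ for every choice of $n$ and $N$. With $n>\max n_i$, every parameter for which $\alpha$ reaches a fixed point within $n$ steps is also a root. So the mere existence of a root of $Q$ gives no contradiction.

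What you actually need is that $Q$ has a root \emph{outside} the parameters with $r_i=1$. That requires a bound on the multiplicity with which these finitely many parameters can divide $Q$. Such a bound is not automatic in characteristic $p$: for $f=X^p-X$ one checks that $f_T^2(\alpha)-\alpha=(T-\lambda_\alpha)^p$, so the whole degree sits at $\lambda_\alpha$.

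\textbf{A repair.} Take $n=0$ and set $R_N(T):=f_T^N(\alpha)-\alpha$. Then the only root of $R_N$ with $r=1$ is $\lambda_\alpha$, the root of the linear polynomial $R_1$.
\begin{itemize}
\item Since $f_T^h(X)\in L[T][X]$, one has $R_{j+h}\equiv R_h\pmod{R_j}$, so the Euclidean algorithm gives $\gcd(R_a,R_b)=R_{\gcd(a,b)}$.
\item For distinct primes $N,N'$ this gcd is $T-\lambda_\alpha$. Hence $\lambda_\alpha$ is a multiple root of $R_N$ for at most one prime $N$. Equivalently, $R_N'(\lambda_\alpha)=\sum_{j=0}^{N-1}f'(\alpha)^j$ vanishes for at most one prime $N$.
\item For every other prime, $\deg R_N=d^{N-1}\ge2$ forces a root $\lambda_N\ne\lambda_\alpha$.
\item The $\lambda_N$ are pairwise distinct, because $R_N$ and $R_{N'}$ share only the root $\lambda_\alpha$.
\end{itemize}
This is exactly the gcd-degree mechanism \eqref{eq4b} in the paper's proof of Theorem \ref{thm43}. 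The paper only cites Ghioca--Hsia for Lemma \ref{lem21}, but Theorem \ref{thm43} with $x=y=\alpha$ already gives the infinitude of $\PP_f(\alpha)$.

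\textbf{Second assertion.} Your proof is correct and is the standard escape argument, as in Lemma \ref{lem24}. The only slip is that the inductive hypothesis should read $|x|_v^d>|\lambda|_v$ rather than $\ge$, since equality allows cancellation in $f(x)+\lambda$. You only use the strict case, so this does not affect the argument.
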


Suppose that $\Prep(f;\alpha,\beta)$ is infinite and that $\alpha,\beta$ are not both in $k$. For any $\lambda_0 \in \Prep(f;\alpha,\beta)$, both $\alpha$ and $\beta$ are algebraic over $k(\lambda_0)$. Hence
\begin{equation*}
\mathrm{trdeg}_k k(\alpha,\beta)\le1,
\end{equation*}
where $\mathrm{trdeg}_k$ denotes the transcendence degree over $k$. Every parameter making $\alpha$ preperiodic is algebraic over $k(\alpha)$. We may therefore choose $t\in L$ transcendental over $k$ such that both $\alpha$ and $\beta$ are algebraic over $k(t)$.
Write
\begin{equation*}
k(t)^{\mathrm{perf}}:=\bigcup_{j\ge0}k(t^{1/p^j})
\end{equation*}
for the perfect closure of $k(t)$. Following \cite[Section 2]{LN25}, let $K\subset L$ be any finite extension of $k(t)^{\mathrm{perf}}$ containing $\alpha$ and $\beta$, and let $\overline K$ be its algebraic closure in $L$. Let $\Omega_K$ be the set of pairwise inequivalent nontrivial places of $K$, and for each $v\in\Omega_K$, let $|\cdot|_v$ denote the corresponding normalized nonarchimedean absolute value. These absolute values are trivial on $k$. For $x\in K$, we have
\begin{equation}\label{eq2b}
|x|_v\le1\text{ for every }v\in\Omega_K\quad\Leftrightarrow\quad x\in k.
\end{equation}

For a place $v$, let $K_v$ be the completion of $K$ at $v$, and let $\mathbb C_v$ be the completion of an algebraic closure of $K_v$, and fix a $K$-embedding $\overline K\hookrightarrow\mathbb C_v$ for each $v$. We also write $|\cdot|_v$ for the extension of the absolute value to $\mathbb C_v$. For $r\ge0$, set $\log^+r:=\log\max(1,r)$. For $x,\lambda\in\mathbb C_v$, define the \emph{local canonical height} by
\begin{equation*}
\hh_{v,\lambda}(x):=\lim_{n\to\infty}d^{-n}\log^+|f_\lambda^n(x)|_v.
\end{equation*}

Normalize the absolute values $|\cdot|_v$ ($v \in \Omega_K$) so that the product formula holds and $\sum_{v\in\Omega_K}\log^+|t|_v=1$. Following \cite[Section 2.1]{LN25}, for $x\in\overline K^*$, define the absolute function-field height by
\begin{equation*}
h(x):=\frac{1}{[K(x):K]}\sum_{v\in\Omega_K}\sum_{y\in\mathrm{Gal}(\overline K/K)\cdot x}\log^+|y|_v,\quad h(0):=0.
\end{equation*}
In particular, $h(x)=\sum_{v\in\Omega_K}\log^+|x|_v$ for $x \in K$.

For $x,\lambda\in\overline K$, define the \emph{global canonical height} by
\begin{equation*}
\hh_\lambda(x):=\lim_{j\to\infty}d^{-j}h(f_\lambda^j(x)).
\end{equation*}
It satisfies $\hh_\lambda(f_\lambda^m(x))=d^m\hh_\lambda(x)$ for every $m\ge0$. Moreover, for every $x,\lambda\in K$, we have
\begin{equation*}
\hh_\lambda(x)=\sum_{v\in\Omega_K}\hh_{v,\lambda}(x).
\end{equation*}
The following theorem is the main arithmetic input for our proofs.

\begin{theorem}\label{thm22}
(\cite[Theorem 2.13]{GH26}, \cite[Theorem 2.1]{LN25}) Let $K$ be as above and let $\alpha,\beta\in K$. Assume that there exists an infinite sequence $(\lambda_n)_{n \ge 1}$ of distinct elements in $\overline{K}$ such that
\begin{equation*}
\lim_{n \to \infty}\hh_{\lambda_n}(\alpha)=\lim_{n \to \infty}\hh_{\lambda_n}(\beta)=0.
\end{equation*}
Then
\begin{equation}\label{eq2c}
\hh_{v,\lambda}(\alpha)=\hh_{v,\lambda}(\beta) \quad \text{for every } v \in \Omega_K \text{ and } \lambda\in \mathbb C_v.
\end{equation}
\end{theorem}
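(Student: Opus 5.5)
The plan is to deduce \eqref{eq2c} from an arithmetic equidistribution theorem on the parameter line, applied to the two parameter heights $\lambda\mapsto\hh_\lambda(\alpha)$ and $\lambda\mapsto\hh_\lambda(\beta)$.

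First reduce to a finitely generated base. Since $\alpha,\beta\in K$ and $K$ is a finite extension of $k(t)^{\mathrm{perf}}$, there are $j$ and a finite extension $K_0$ of $k(t^{1/p^j})$ with $\alpha,\beta\in K_0$ and $K\subseteq K_0^{\mathrm{perf}}$. Places of $K$ and $K_0$ correspond bijectively, because purely inseparable extensions have a unique extension of each absolute value, and the completions $\mathbb C_v$ agree. So we may assume $K$ is the function field of a smooth projective curve over $k$, and use the normalized product formula.

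Fix $v\in\Omega_K$ and put $G_{\alpha,v}(\lambda):=\hh_{v,\lambda}(\alpha)$ for $\lambda\in\mathbb C_v$. The first main step is to show that $G_{\alpha,v}$ extends to a continuous subharmonic function on the Berkovich affine line $\mathbb A^{1,\mathrm{an}}_v$.
\begin{enumerate}[label=(\arabic*)]
\item Check that $\lambda\mapsto f_\lambda^n(\alpha)$ is a polynomial in $\lambda$ of degree $d^{n-1}$, with leading coefficient a unit at every place; this uses that $f$ is monic.
\item Deduce that $d^{-(n-1)}\log^+|f_\lambda^n(\alpha)|_v$ converges uniformly on $\mathbb A^{1,\mathrm{an}}_v$, by the usual telescoping estimate.
\item Conclude that the limit $G_{\alpha,v}$ is continuous and subharmonic.
\item Show that for $|\lambda|_v$ larger than an explicit bound $R_v$ (depending on $|\alpha|_v$ and the coefficients), one has exactly $G_{\alpha,v}(\lambda)=\frac1d\log|\lambda|_v$. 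For all but finitely many $v$ we have $R_v=1$, and then $G_{\alpha,v}=\frac1d\log^+|\lambda|_v$.
\end{enumerate}
Consequently $\lambda\mapsto\hh_\lambda(\alpha)=\sum_v G_{\alpha,v}$ is the height attached to a semipositive continuous adelic metric on $\mathcal O(1)$ on $\mathbb P^1_K$. Its Laplacian measures $\mu_{\alpha,v}=\Delta G_{\alpha,v}$ are probability measures. The height is nonnegative, and the height of the whole line, computed via $\lambda$ with $\alpha$ preperiodic using Lemma \ref{lem21}, is $0$. The same holds for $\beta$.

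Next apply equidistribution. The hypothesis provides distinct $\lambda_n$ with both heights tending to $0$, so it is a generic small sequence in $\mathbb P^1$. By arithmetic equidistribution over function fields (Baker--Rumely, Favre--Rivera-Letelier, Thuillier/Yuan), the Galois orbits of $\lambda_n$ equidistribute at each $v$ both to $\mu_{\alpha,v}$ and to $\mu_{\beta,v}$. Hence $\mu_{\alpha,v}=\mu_{\beta,v}$ for every $v$.

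It follows that $G_{\alpha,v}-G_{\beta,v}$ is harmonic on $\mathbb A^{1,\mathrm{an}}_v$. It is zero for $|\lambda|_v\gg1$, since both functions equal $\frac1d\log|\lambda|_v$ there. A harmonic function on the Berkovich line that is bounded, indeed vanishing near $\infty$, is constant, so it is identically $0$. This gives \eqref{eq2c} for every $\lambda\in\mathbb C_v$.

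The main obstacle I expect is the equidistribution step in positive characteristic. One must verify that the function-field version applies to these adelic metrics: continuity of the local potentials, and the metrics being trivial at almost all places, which is where monicity and $f(0)=0$ enter. The reduction away from the perfect closure also needs care, because normalizations of the height change by the factor $p^j$. Since both heights scale the same way, the vanishing limits and the final equality are unaffected.
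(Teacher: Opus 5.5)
The paper does not reprove this statement; it quotes \cite[Theorem 2.13]{GH26} and \cite[Theorem 2.1]{LN25}. Your outline is the argument behind those results, so the overall route is right: parameter-space Green functions, adelic equidistribution of the Galois orbits of the $\lambda_n$ for two metrics on $\mathcal O(1)$, and a harmonic difference vanishing near $\infty$.

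One step goes the wrong way, though: your opening reduction from $K$ to a finite extension $K_0$ of $k(t^{1/p^j})$. The potential-theoretic equidistribution proofs you cite (Baker--Rumely, Favre--Rivera-Letelier) bound the energy by applying the product formula to the discriminant of the conjugates of $\lambda_n$. That discriminant vanishes when $\lambda_n$ is inseparable over the base, so these theorems are formulated for points separable over the ground field. Over your non-perfect $K_0$ the $\lambda_n$ need not be separable, so the citation does not apply as stated.

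This is exactly why the paper takes $K$ to be a finite extension of $k(t)^{\mathrm{perf}}$. Such a $K$ is itself a product formula field with the normalization of Subsection~\ref{Sub21}, since for $x\in K^*$ some power $x^{p^m}$ lies in a finite extension of $k(t)$. Moreover, every element of $\overline K$ is separable over $K$. So simply drop the reduction. Nothing is lost: the $K$-Galois orbit of $\lambda_n$ is the set of distinct $K_0$-conjugates, each occurring with the same multiplicity $p^e$, and the places and $\mathbb C_v$ are unchanged.

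There is also a smaller normalization slip. Since $G_{\alpha,v}(\lambda)=\frac1d\log|\lambda|_v$ for large $|\lambda|_v$, the potential of the metric on $\mathcal O(1)$ is $d\,G_{\alpha,v}$. Hence $\Delta G_{\alpha,v}$ has total mass $1/d$, and the probability measures are $d\,\Delta G_{\alpha,v}$; this does not affect the conclusion. The same exact formula shows directly that each local set $\{G_{\alpha,v}=0\}$ has capacity $1$ (Robin constant $0$) at every place. So your appeal to Lemma~\ref{lem21} for the vanishing of the height of $\mathbb P^1$ is correct but optional.
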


A preperiodic point has global canonical height zero, so the infinitude of $\Prep(f;\alpha,\beta)$ implies \eqref{eq2c}. In the local arguments below, $K$ may be any finite extension of $k(t)^{\mathrm{perf}}$ containing $\alpha$ and $\beta$. We emphasize that Theorem \ref{thm32} and Proposition \ref{prop34} require only \eqref{eq2c}, and not the infinitude of $\Prep(f;\alpha,\beta)$.

Since $f$ is monic with constant coefficients, \cite[Corollary 2.7]{GH26} gives $\hh_{v,0}(x)=\log^+|x|_v$. Consequently, we have
\begin{equation*}
\max(1,|\alpha|_v)=\max(1,|\beta|_v)=:C_v.
\end{equation*}
The set $S:=\{v\in\Omega_K:C_v>1\}$ is nonempty unless $\alpha,\beta \in k$. We will repeatedly use the escape estimate appearing in the proof of \cite[Lemma 2.6]{GH26}:
\begin{equation*}
|z|_v>R_\lambda:=\max(1,|\lambda|_v^{1/d})\quad\Rightarrow\quad |f_\lambda(z)|_v=|z|_v^d.
\end{equation*}
It follows that $\hh_{v,\lambda}(z)=0$ if and only if the orbit of $z$ under $f_\lambda$ is bounded, in which case the entire orbit is contained in the disk $|z|_v\le R_\lambda$.

\subsection{A coefficient bound}\label{Sub22}

Fix a place $v$, and recall that the absolute value $|\cdot|_v$ is trivial on $k$. We write $|\mathbb C_v^*|:=\{|z|_v:z\in\mathbb C_v^*\}$ for its value group.

\begin{lemma}\label{lem23}
Let $Q,H\in\mathbb C_v[T]$, where $Q$ is monic of degree $N\ge1$ and $H\ne0$ has degree $m \ge 0$. Let $R\in|\mathbb C_v^*|$ and $B>0$. If
\begin{equation*}
|Q(\lambda)|_v\le R\quad\Rightarrow\quad |H(\lambda)|_v\le B
\end{equation*}
for every $\lambda\in\mathbb C_v$, then $|\lc_T H|_v^N R^m\le B^N$.
\end{lemma}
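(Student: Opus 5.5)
Factor $Q$ over the algebraically closed complete field $\mathbb C_v$ as $Q(T)=\prod_{i=1}^N(T-\rho_i)$. We then compare the growth of $H$ with that of $Q$ on suitable closed disks. The natural tool is the maximum principle for polynomials over a complete algebraically closed nonarchimedean field. For a disk $\overline D(a,r)$ with $r\in|\mathbb C_v^*|$, it says that
\[
\sup_{|\lambda-a|_v\le r}|P(\lambda)|_v=\max_j|b_j|_v r^j,
\]
where $P(T)=\sum_j b_j(T-a)^j$. This is the Gauss norm on the disk, and it is attained because the residue field is infinite.

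The plan is to find one disk that lies inside the sublevel set $\{|Q|_v\le R\}$ and is large enough to force the bound on $\lc H$. First consider the case where all roots lie in a single small cluster. Put $r:=R^{1/N}$; this lies in $|\mathbb C_v^*|$, since the value group of $\mathbb C_v$ is divisible. Take the disk $\overline D(\rho_1,r)$. If every root satisfies $|\rho_i-\rho_1|_v\le r$, then $|Q(\lambda)|_v=\prod|\lambda-\rho_i|_v\le r^N=R$ on this disk. The hypothesis then gives $|H|_v\le B$ on the disk. The maximum principle yields $|\lc H|_v r^m\le B$, and raising to the $N$-th power gives exactly $|\lc_T H|_v^N R^m\le B^N$.

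In general the roots need not cluster, so the key step is to handle spread-out roots. I would argue by induction on $N$, or more robustly by a level-set argument. The set $U=\{\lambda:|Q(\lambda)|_v\le R\}$ is a finite disjoint union of closed disks $D_1,\dots,D_s$ with radii $r_j$. On each $D_j$, $Q$ has $N_j$ roots, where $\sum N_j=N$. On $D_j$ we have $|Q(\lambda)|_v=|c_j|_v\,|\lambda-a_j|_v^{N_j}$ for points $\lambda$ near the boundary, and the maximum of $|Q|$ on $D_j$ equals $R$. So $R=|Q'_j|\,r_j^{N_j}$, where $|Q'_j|$ is the product of the distances from $D_j$ to the roots outside it. Each such distance exceeds $r_j$. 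Indeed, a disk containing a root from outside together with $D_j$ would contain points of $U$ outside $D_j$, because $U$ is a maximal disk decomposition. Hence $R\le r_j^{N_j}\prod_{i\notin D_j}|a_j-\rho_i|_v$.

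This alone does not give $r_j\ge R^{1/N}$, so I would combine it with a second fact. $H$ is bounded by $B$ on all of $U$, and $U$ is where $|Q|\le R$. Writing $|H(\lambda)|_v=|\lc H|_v\prod|\lambda-\eta_l|_v$ and comparing at points $\lambda$ on the boundary of $U$, use the chordal-type counting $\prod_l|\lambda-\eta_l|\ge$ (something). Cleaner: apply the pigeonhole idea via Newton polygons. Consider $\log|Q|$ and $\log|H|$ as functions of $\log|\lambda-a|$ along a path to infinity. Choose a point $a\in U$ and a radius $r$ minimal with $\overline D(a,r)\supseteq\{\rho_i\}$. For $s\ge r$, $\sup_{\overline D(a,s)}|Q|=s^N$, while $\sup_{\overline D(a,s)}|H|\ge|\lc H|s^m$. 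Choosing $s$ with $s^N=R$ works when $R\ge r^N$, and it reduces to the first case. When $R<r^N$, split the roots into the subclusters of the disk of radius $r$ and induct, replacing $Q$ by the factor of the subcluster containing a point of $U$. On a smaller disk the other factors have constant absolute value, so this gives a monic $\tilde Q$ of degree $\tilde N<N$ and a level $\tilde R=R/\text{const}$. The constant is at least $r^{N-\tilde N}$, so $\tilde R\le R/r^{N-\tilde N}$. I expect the main obstacle to be this bookkeeping: verifying that the inductive inequality $|\lc H|^{\tilde N}\tilde R^m\le B^{\tilde N}$ implies the original one. One also needs to handle the possibility that the supremum in the first case is attained only off the disk; this is settled by the maximum principle together with the infiniteness of the residue field $k$.
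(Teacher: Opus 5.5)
Your clustered case is correct: if all roots of $Q$ lie in a closed disk $\overline D(a,r)$ with $r^N\le R$, then $\overline D(a,R^{1/N})\subseteq\{|Q|_v\le R\}$, and the Gauss-norm identity gives $|\lc_T H|_v R^{m/N}\le B$. The general case is not proved, and the induction you sketch cannot be closed by bookkeeping. Suppose $R<r^N$, where $r>0$ is the diameter of the root set of $Q$. On the open residue disk $\{|\lambda-b|_v<r\}$ containing $\tilde N<N$ of the roots, one has exactly $|Q|_v=r^{N-\tilde N}|\tilde Q|_v$. Applying the lemma to $(\tilde Q,H)$ at level $\tilde R=R/r^{N-\tilde N}$ yields only $|\lc_TH|_v^{\tilde N}R^m\le B^{\tilde N}r^{m(N-\tilde N)}$. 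Since $r^N>R$, this is strictly weaker than $|\lc_TH|_v^NR^m\le B^N$.

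The problem is structural: the hypothesis on one component of $\{|Q|_v\le R\}$ does not imply the conclusion. Take $Q=T(T-1)$, $R<1$, $H=T^m$ and $B=R^m$. The component near $0$ is $\overline D(0,R)$, and $|H|_v\le B$ there. Yet $|\lc_TH|_v^2R^m=R^m>R^{2m}=B^2$. The same example shows your level-set idea cannot give $r_j\ge R^{1/N}$, since both components have radius $R<R^{1/2}$.

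The missing idea is to use all components at once, each weighted by the number of roots of $Q$ it contains. The paper does this with a resultant. Write $H=a\prod_{i=1}^m(T-\rho_i)$, where here the $\rho_i$ are the roots of $H$, not of $Q$. The paper chooses $y\in\{\gamma z_0,\dots,\gamma z_m\}$, with $|\gamma|_v=R$ and distinct $z_j\in k$, such that $|Q(\rho_i)-y|_v\ge R$ for every $i$. This is possible because the $m+1$ candidates are pairwise at distance $R$, so each $Q(\rho_i)$ is within distance less than $R$ of at most one of them. All $N$ roots $\lambda_1,\dots,\lambda_N$ of $Q-y$ satisfy $|Q(\lambda_j)|_v=|y|_v\le R$. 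Hence $B^N\ge\prod_j|H(\lambda_j)|_v=|a|_v^N\prod_i|Q(\rho_i)-y|_v\ge|a|_v^NR^m$.

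If you prefer to keep your inductive framework, you can repair it by treating every residue disk $D_1,\dots,D_t$ simultaneously:
\begin{enumerate}[label=(\arabic*)]
\item Write $H=H_jG_j$, where $H_j$ carries the leading coefficient and the $m_j$ roots of $H$ in $D_j$. Then $|G_j|_v$ is a constant $g_j\ge r^{m-m_j}$ on $D_j$.
\item Apply the induction hypothesis to $(\tilde Q_j,H_j)$ at level $\tilde R_j=R/r^{N-\tilde N_j}$ with bound $B/g_j$. This is legitimate because $\tilde R_j<r^{\tilde N_j}$ forces $\{|\tilde Q_j|_v\le\tilde R_j\}\subseteq D_j$.
\item Multiply the $t$ resulting inequalities. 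The powers of $r$ combine to give $|\lc_TH|_v^NR^{m-m_0}r^{Nm_0}\le B^N$, where $m_0$ is the number of roots of $H$ outside $\bigcup_jD_j$. Since $r^N>R$, this implies the claim.
\end{enumerate}
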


\begin{proof}
The case $m=0$ is immediate, since $Q$ has a root. Suppose $m\ge1$, and write
\begin{equation*}
H(T)=a\prod_{i=1}^m(T-\rho_i).
\end{equation*}
Choose $\gamma\in\mathbb C_v^*$ with $|\gamma|_v=R$ and distinct $z_0,\ldots,z_m\in k$. The elements $y_j=\gamma z_j$ satisfy
\begin{equation*}
|y_j|_v\le R \quad \text{and} \quad |y_j-y_\ell|_v=R\quad(j\ne\ell).
\end{equation*}
For each $i$, at most one $y_j$ satisfies $|Q(\rho_i)-y_j|_v<R$. Thus there is an element $y\in\{y_0,\ldots,y_m\}$ such that $|Q(\rho_i)-y|_v\ge R$ for every $1 \le i \le m$. 
Write $Q(T)-y=\prod_{j=1}^N(T-\lambda_j)$. Since $|Q(\lambda_j)|_v=|y|_v\le R$, the hypothesis gives
\begin{equation*}
B^N\ge\prod_{j=1}^N|H(\lambda_j)|_v
=|a|_v^N\prod_{i=1}^m|Q(\rho_i)-y|_v
\ge |a|_v^N R^m. \qedhere
\end{equation*}
\end{proof}

\begin{lemma}\label{lem24}
Let $x\in\mathbb C_v$ and $C\ge\max(1,|x|_v)$. Then for every $n\ge1$ and $\lambda\in\mathbb C_v$,
\begin{equation}\label{eq2e}
|f_\lambda^n(x)|_v\le C
\quad\Leftrightarrow\quad
\hh_{v,\lambda}(x)\le d^{-n}\log C.
\end{equation}
\end{lemma}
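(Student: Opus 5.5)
We prove the two implications of \eqref{eq2e} separately, using only the escape estimate from Subsection \ref{Sub21}: if $|z|_v>R_\lambda=\max(1,|\lambda|_v^{1/d})$ then $|f_\lambda(z)|_v=|z|_v^d$. We also use the functional equation $\hh_{v,\lambda}(f_\lambda^n(x))=d^n\hh_{v,\lambda}(x)$, which is immediate from the definition of the local canonical height as a limit.

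For ($\Rightarrow$), suppose $|f_\lambda^n(x)|_v\le C$ and put $y=f_\lambda^n(x)$. It suffices to show $\hh_{v,\lambda}(y)\le\log C$, since then $\hh_{v,\lambda}(x)=d^{-n}\hh_{v,\lambda}(y)\le d^{-n}\log C$. We split into two cases.
\begin{itemize}
\item If the orbit of $y$ stays bounded by $\max(C,R_\lambda)$, then the claim follows only when $R_\lambda\le C$.
\item If $|\lambda|_v>C^d$, we argue backwards. We have $|x|_v\le C<|\lambda|_v^{1/d}$, so $|f(x)|_v\le C^d<|\lambda|_v$ because $f$ is monic with constant coefficients. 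Hence $|f_\lambda(x)|_v=|\lambda|_v>C$. Since $|\lambda|_v>R_\lambda$, the escape estimate gives $|f_\lambda^j(x)|_v=|\lambda|_v^{d^{j-1}}>C$ for all $j\ge1$. This contradicts $|f_\lambda^n(x)|_v\le C$ with $n\ge1$.
\end{itemize}
So the hypothesis forces $|\lambda|_v\le C^d$, i.e.\ $R_\lambda\le C$. Then $|y|_v\le C$ gives $|f_\lambda(y)|_v\le\max(|y|_v^d,|\lambda|_v)\le C^d$. More generally, by induction $|f_\lambda^j(y)|_v\le C^{d^j}$, so $d^{-j}\log^+|f_\lambda^j(y)|_v\le\log C$, and the limit gives $\hh_{v,\lambda}(y)\le\log C$.

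For ($\Leftarrow$), suppose $|f_\lambda^n(x)|_v>C$ and set $z=f_\lambda^n(x)$. We distinguish two cases according to the size of $|z|_v$ relative to $R_\lambda$.
\begin{itemize}
\item If $|z|_v>R_\lambda$, the escape estimate iterates to $\hh_{v,\lambda}(z)=\log|z|_v>\log C$. Hence $\hh_{v,\lambda}(x)=d^{-n}\hh_{v,\lambda}(z)>d^{-n}\log C$, as required.
\item Otherwise $C<|z|_v\le R_\lambda$, so $|\lambda|_v>C^d\ge1$. By the computation in the first part, $|f_\lambda(x)|_v=|\lambda|_v>R_\lambda$, so $\hh_{v,\lambda}(x)=d^{-1}\log|\lambda|_v>\log C\ge d^{-n}\log C$.
\end{itemize}

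The main obstacle is the case analysis on $|\lambda|_v$ versus $C^d$. The key observation is that $|\lambda|_v>C^d$ forces immediate escape at the first step, with $|f_\lambda(x)|_v=|\lambda|_v$. This relies on $f$ being monic with $|\text{coefficients}|_v\le1$, so that $|f(x)|_v\le C^d$; this is the normalization in force in Subsection \ref{Sub21}. In the second bullet of ($\Leftarrow$) we also use $\log C\ge d^{-n}\log C$, which holds because $C\ge1$.
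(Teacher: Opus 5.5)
Your proof is correct and follows essentially the paper's route: you split on $|\lambda|_v>C^d$, where the orbit of $x$ escapes from the first step (you re-derive this directly rather than citing Ghioca--Hsia's Lemma 2.6(iii)), versus $|\lambda|_v\le C^d$, where the escape estimate and the bound $|f_\lambda^j(y)|_v\le C^{d^j}$ give both directions. The blemishes are only cosmetic: the first bullet of ($\Rightarrow$) is vestigial and can be deleted; the second case of ($\Leftarrow$) is vacuous, since $|\lambda|_v>C^d$ forces $|f_\lambda^n(x)|_v=|\lambda|_v^{d^{n-1}}>R_\lambda$; and $|f(y)|_v$ should be bounded by $\max(1,|y|_v)^d$ rather than $|y|_v^d$, which still gives $C^d$.
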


\begin{proof}
If $|\lambda|_v>C^d$, then $|\lambda|_v>\max(1,|x|_v^d)$ so \cite[Lemma 2.6(iii)]{GH26} gives
\begin{equation*}
|f_\lambda^n(x)|_v=|\lambda|_v^{d^{n-1}}>C \quad \text{and} \quad 
\hh_{v,\lambda}(x)=\frac1d\log|\lambda|_v>\log C.
\end{equation*}
Thus both conditions in \eqref{eq2e} are false.

Assume that $|\lambda|_v\le C^d$, and set $w=f_\lambda^n(x)$. If $|w|_v>C$, then $|w|_v>R_\lambda$ so the escape estimate gives
\begin{equation*}
\hh_{v,\lambda}(x)=d^{-n}\log|w|_v>d^{-n}\log C.
\end{equation*}
If $|w|_v\le C$, then induction gives $|f_\lambda^j(w)|_v\le C^{d^j}$ for every $j\ge0$, so
\begin{equation*}
\hh_{v,\lambda}(x)=d^{-n}\hh_{v,\lambda}(w)\le d^{-n}\log C.
\qedhere
\end{equation*}
\end{proof}

\begin{proposition}\label{prop25}
Assume that \eqref{eq2c} holds, and let $P_n(T)=f_T^n(\beta)-f_T^n(\alpha)$ and $N_n=d^{n-1}$. For every $n\ge1$ with $P_n\ne0$, write $m_n:=\deg_TP_n$. Then for every $v \in \Omega_K$,
\begin{equation*}
|\lc_TP_n|_v\le C_v^{1-m_n/N_n}\le C_v.
\end{equation*}
\end{proposition}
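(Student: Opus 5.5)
We apply Lemma \ref{lem23} with $Q(T):=f_T^n(\alpha)$, $H:=P_n$, $R:=C_v$ (or a value-group element approximating it) and $B:=C_v$.

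First we check that $Q$ is monic in $T$ of degree $N_n=d^{n-1}$. Since $f$ is monic with constant coefficients, $f_T(\alpha)=f(\alpha)+T$ is monic of degree $1$ in $T$. Inductively, $f_T^{j+1}(\alpha)=f(f_T^j(\alpha))+T$ is monic of degree $d^j$ for $j\ge1$, because $d^j>1$. Hence $Q$ is monic of degree $d^{n-1}=N_n$, and the same holds for $f_T^n(\beta)$.

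Next we verify the implication required by Lemma \ref{lem23}. Suppose $|Q(\lambda)|_v\le C_v$. Since $C_v\ge\max(1,|\alpha|_v)$, Lemma \ref{lem24} gives $\hh_{v,\lambda}(\alpha)\le d^{-n}\log C_v$. By \eqref{eq2c} the same bound holds for $\beta$. Since also $C_v\ge\max(1,|\beta|_v)$, the converse direction of Lemma \ref{lem24} gives $|f_\lambda^n(\beta)|_v\le C_v$. The ultrametric inequality then yields $|P_n(\lambda)|_v\le C_v$.

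If $C_v\in|\mathbb C_v^*|$ (for instance $C_v=1$, or $C_v=|\alpha|_v$), Lemma \ref{lem23} gives
\[
|\lc_TP_n|_v^{N_n}C_v^{m_n}\le C_v^{N_n},
\]
that is, $|\lc_TP_n|_v\le C_v^{1-m_n/N_n}$. Indeed $C_v$ always lies in the value group, being either $1$ or $|\alpha|_v$ with $\alpha\ne0$. The final inequality $C_v^{1-m_n/N_n}\le C_v$ follows from $m_n\ge0$ and $C_v\ge1$.

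The only delicate point is the reduction of the hypothesis on $\lambda$ to a statement about local heights. This works because Lemma \ref{lem24} is an equivalence valid for the common bound $C_v$ for both points. Once that is in place, everything else is bookkeeping on degrees and leading coefficients.
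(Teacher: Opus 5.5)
Your proof is correct and is the paper's argument: apply Lemma~\ref{lem23} with $Q=f_T^n(\alpha)$, $H=P_n$ and $R=B=C_v$, and use Lemma~\ref{lem24} together with \eqref{eq2c} to pass the bound $|f_\lambda^n(\alpha)|_v\le C_v$ to $\beta$, noting that $C_v\in|\mathbb C_v^*|$. Your opening hedge about approximating $C_v$ by a value-group element is unnecessary, since, as you observe, $C_v$ is either $|1|_v$ or $|\alpha|_v$.
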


\begin{proof}
Write $U_n(T)=f_T^n(\alpha)$, $V_n(T)=f_T^n(\beta)$, and $C=C_v$. Since $f$ is monic, both $U_n$ and $V_n$ are monic polynomials of degree $N_n$, so $m_n<N_n$. By \eqref{eq2c} and Lemma \ref{lem24}, we have
\begin{equation*}
|U_n(\lambda)|_v\le C\quad\Leftrightarrow\quad |V_n(\lambda)|_v\le C.
\end{equation*}
For every $\lambda \in \mathbb C_v$ satisfying these equivalent conditions, we have $|P_n(\lambda)|_v\le C$. Since $C\in |\mathbb C_v^*|$, Lemma \ref{lem23} applied to $Q=U_n$, $H=P_n$ and $R=B=C$ gives
\begin{equation*}
|\lc_TP_n|_v^{N_n}C^{m_n}\le C^{N_n}.
\end{equation*}
We conclude that $|\lc_TP_n|_v\le C^{1-m_n/N_n}\le C$.
\end{proof}

\subsection{Degree and leading-coefficient recurrences}\label{Sub23}

Since $f(X)$ is monic and $f(0)=0$, we have
\begin{equation*}
f(X)=\sum_{i=1}^r c_iX^{d_i}
\end{equation*}
for some $1\le d_1<\cdots<d_r=d$ and $c_1, \ldots, c_r \in k^*$ with $c_r=1$. Let $d_i=q_is_i$ with $q_i=p^{v_p(d_i)}$, $p \nmid s_i$, and let $A_i=d_i-q_i$. Let $e:=f(\beta)-f(\alpha)$, which is equal to $\epsilon_1$ in \cite{LN25}. 

\begin{lemma}\label{lem26}
Set
\begin{equation*}
U_n=f_T^n(\alpha),\quad V_n=f_T^n(\beta),\quad P_n=V_n-U_n,\quad N_n=d^{n-1}.
\end{equation*}
Assume that $|e|_v>1$. Then the following hold for every $n\ge1$.
\begin{enumerate}[label=(\alph*)]
\item $P_n\ne0$, and hence $m_n:=\deg_TP_n<N_n$ and $u_n:=\lc_TP_n$ are well defined. 

\item Define $I_n:=\{i:A_iN_n+q_im_n=m_{n+1}\}$ and $Q_n:=\max_{i\in I_n}q_i$. Then
\begin{equation}\label{eq2f}
\begin{aligned}
m_{n+1}&=\max_i(A_iN_n+q_im_n),\\
u_{n+1}&=\sum_{i\in I_n}c_is_i u_n^{q_i},\\
|u_{n+1}|_v&=|u_n|_v^{Q_n}.
\end{aligned}
\end{equation}
\end{enumerate}
\end{lemma}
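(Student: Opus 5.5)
The plan is to prove (a) and (b) together by induction on $n$, carrying along the extra hypothesis $|u_n|_v>1$. That hypothesis is what makes the leading coefficient of $P_{n+1}$ nonzero.

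\textbf{Base case.} Since $f_T(\alpha)=f(\alpha)+T$ and $f$ is monic of degree $d$, induction gives that $U_n$ and $V_n$ are monic of degree $N_n=d^{n-1}$. For $n=1$ we have $P_1=f(\beta)-f(\alpha)=e$, a nonzero constant. Hence $m_1=0<N_1=1$, $u_1=e$ and $|u_1|_v>1$.

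\textbf{Expansion of $P_{n+1}$.} Assume $P_n\neq0$, $m_n<N_n$ and $|u_n|_v>1$. The parameter $T$ cancels in the difference, so
\begin{equation*}
P_{n+1}=f(U_n+P_n)-f(U_n)=\sum_{i=1}^r c_i\bigl((U_n+P_n)^{d_i}-U_n^{d_i}\bigr).
\end{equation*}
Because $d_i=q_is_i$ and we are in characteristic $p$, $(U_n+P_n)^{d_i}=(U_n^{q_i}+P_n^{q_i})^{s_i}$. Therefore the $i$-th summand equals
\begin{equation*}
c_i\sum_{j=1}^{s_i}\binom{s_i}{j}U_n^{q_i(s_i-j)}P_n^{q_ij}.
\end{equation*}
The $j$-th term has degree at most $q_i(s_i-j)N_n+q_ijm_n$. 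Since $m_n<N_n$, this is strictly decreasing in $j$, so the $j=1$ term dominates. As $p\nmid s_i$ and $U_n$ is monic, the $i$-th summand has exact degree $q_i(s_i-1)N_n+q_im_n=A_iN_n+q_im_n$ and leading coefficient $c_is_iu_n^{q_i}$.

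\textbf{Combining the summands.} Let $M=\max_i(A_iN_n+q_im_n)$ and $I_n=\{i:A_iN_n+q_im_n=M\}$. The coefficient of $T^M$ in $P_{n+1}$ is then $\sum_{i\in I_n}c_is_iu_n^{q_i}$. The main point is that this coefficient does not vanish.
\begin{enumerate}[label=(\arabic*)]
\item If $i,i'\in I_n$ and $q_i=q_{i'}$, then $A_i=A_{i'}$, so $d_i=d_{i'}$ and $i=i'$. Thus the exponents $q_i$ for $i\in I_n$ are pairwise distinct.
\item Since $|c_is_i|_v=1$ and $|u_n|_v>1$, the terms $c_is_iu_n^{q_i}$ have pairwise distinct absolute values $|u_n|_v^{q_i}$.
\item By the strict ultrametric inequality, the sum has absolute value $|u_n|_v^{Q_n}>1$, and in particular it is nonzero.
\end{enumerate}
Hence $P_{n+1}\neq0$, $m_{n+1}=M$, and $u_{n+1}=\sum_{i\in I_n}c_is_iu_n^{q_i}$ with $|u_{n+1}|_v=|u_n|_v^{Q_n}>1$. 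Finally, $m_{n+1}<N_{n+1}$ holds because $U_{n+1}$ and $V_{n+1}$ are both monic of degree $N_{n+1}$. This closes the induction and proves (a) and (b).

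The only delicate step is the nonvanishing of the leading coefficient when several indices attain the maximum. Parts (1)--(3) of the combining step handle it, and this is exactly where the hypothesis $|e|_v>1$ enters.
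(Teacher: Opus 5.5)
Your proof is correct and follows the paper's argument essentially step for step. Both run an induction that carries $|u_n|_v>1$, expand via $(U_n^{q_i}+P_n^{q_i})^{s_i}$ with the $j=1$ term dominating because $m_n<N_n$, note that the $q_i$ are distinct over $I_n$, and apply the strict ultrametric inequality to see that the leading coefficient is nonzero with absolute value $|u_n|_v^{Q_n}$; your only addition is the explicit check that $U_n,V_n$ are monic of degree $N_n$, which the paper takes for granted.
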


\begin{proof}
Use induction on $n \ge 1$. Since $P_1=e$, the hypothesis $|e|_v>1$ gives $P_1=e\ne0$, $m_1=0$ and $|u_1|_v=|e|_v>1$.

Suppose that $P_n\ne0$ and $|u_n|_v>1$. Since the polynomials $U_n$ and $V_n$ are monic of degree $N_n$, we have $m_n<N_n$. For each $i$,
\begin{equation*}
V_n^{d_i}-U_n^{d_i}
=(U_n+P_n)^{d_i}-U_n^{d_i}
=\sum_{j=1}^{s_i}\binom{s_i}{j}
 U_n^{q_i(s_i-j)}P_n^{q_ij}.
\end{equation*}
Among the nonzero terms, the term with $j=1$ has the unique largest $T$-degree, since $m_n<N_n$. Its degree and leading coefficient are $A_iN_n+q_im_n$ and $s_iu_n^{q_i}$, respectively. Among the indices attaining $\max_i(A_iN_n+q_im_n)$, the values $q_i$ are distinct. Indeed, two such indices with the same $q_i$ would have the same $A_i$, and hence the same $d_i$. Since $|c_is_i|_v=1$ and $|u_n|_v>1$, the corresponding sum of leading coefficients has a unique term of largest absolute value and is therefore nonzero. Thus
\begin{equation*}
P_{n+1}=\sum_{i=1}^r c_i(V_n^{d_i}-U_n^{d_i})\ne0,
\quad m_{n+1}=\max_i(A_iN_n+q_im_n).
\end{equation*}
The indices attaining this maximum are precisely those in $I_n$, so $u_{n+1}=\sum_{i\in I_n}c_is_i u_n^{q_i}$. The term with exponent $Q_n$ has the unique largest absolute value, so $|u_{n+1}|_v=|u_n|_v^{Q_n}$. This completes the induction and proves \eqref{eq2f}.
\end{proof}

Assume that $|e|_v>1$, and define $\theta_n:=m_n/d^{n-1}$ for every $n\ge 1$. By Lemma \ref{lem26}, we have
\begin{equation*}
\theta_1=0,\quad \theta_{n+1}=\frac1d\max_i(A_i+q_i\theta_n),\quad
|u_n|_v=|e|_v^{Q_1\cdots Q_{n-1}}.
\end{equation*}

\begin{lemma}\label{lem27}
Assume that $f$ is non-additive, $|e|_v>1$, and the sequence $(|u_n|_v)_{n\ge1}$ is bounded. Then there is an index $1 \le \kappa \le r$ such that
\begin{equation}\label{eq2g}
q_\kappa=1,\quad M:=A_\kappa\ge1,\quad
\sigma:=\frac{M}{d-1}\in(0,1],\quad A_i\le(d-q_i)\sigma\quad(1\le i\le r).
\end{equation}
Moreover, $I_n=\{\kappa\}$ for all sufficiently large $n$, and $M>A_i$ for every $i\ne\kappa$.
\end{lemma}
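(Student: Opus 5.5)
The plan is to first show that boundedness forces every $Q_n$ to equal $1$, and then to read off all the claims from the recurrence for $\theta_n$.

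\textbf{Step 1: all $Q_n$ equal $1$.} By Lemma \ref{lem26}, $|u_n|_v=|e|_v^{Q_1\cdots Q_{n-1}}$ with $|e|_v>1$. Each $Q_j$ is a power of $p$, so the exponents $Q_1\cdots Q_{n-1}$ are nondecreasing in $n$. If some $Q_j\ge p$, the exponent is multiplied by at least $p$ at that step and never decreases afterwards. Hence boundedness of $(|u_n|_v)$ forces only finitely many $Q_j\ge p$. In fact it forces none, because each such $j$ multiplies the exponent by $p$, and this must not happen infinitely often. I will argue more carefully that actually $Q_n=1$ for all $n$ large, and then restart the argument from a large index. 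Since the statement only asks about large $n$ for $I_n$, it is enough to work with $n\ge n_0$ where $Q_n=1$.

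\textbf{Step 2: $I_n$ is a single index with $q=1$.} The $q_i$ attached to indices in $I_n$ are pairwise distinct, as shown in the proof of Lemma \ref{lem26}. Together with $Q_n=\max_{i\in I_n}q_i=1$, this forces $I_n=\{\kappa_n\}$ with $q_{\kappa_n}=1$. Among indices with $q_i=1$, the lines $\theta\mapsto A_i+\theta$ are parallel with distinct intercepts. So the index attaining the maximum among them is always the one with the largest $A_i$ subject to $q_i=1$; call it $\kappa$. Thus $I_n=\{\kappa\}$ for all $n\ge n_0$, and we set $M:=A_\kappa$.

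\textbf{Step 3: the recurrence and its limit.} For $n\ge n_0$ we get $\theta_{n+1}=(M+\theta_n)/d$, so $\theta_n\to M/(d-1)=\sigma$. Since $m_n<N_n$, we have $\theta_n<1$, and hence $\sigma\le1$. Because $I_n=\{\kappa\}$, for every $i\ne\kappa$ we have the strict inequality $A_i+q_i\theta_n<M+\theta_n$. Letting $n\to\infty$ gives $A_i+q_i\sigma\le M+\sigma=d\sigma$, that is, $A_i\le(d-q_i)\sigma$. For $i=\kappa$ this holds with equality.

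\textbf{Step 4: $M\ge1$ and $M>A_i$ for $i\ne\kappa$.} To get $M\ge1$, suppose $M=0$. Then $\sigma=0$, and the inequality gives $A_i\le0$ for all $i$, so every $d_i$ is a power of $p$ and $f$ is additive, a contradiction. Hence $\sigma\in(0,1]$. For $M>A_i$, I would like the strict inequality at $\theta=\theta_n$ together with $\theta_n\ge0$ and $q_i\ge1$. This gives $A_i<M+\theta_n-q_i\theta_n\le M$ when $q_i\ge1$, which is exactly what is needed.

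\textbf{Main obstacle.} The delicate point is Step 1: guaranteeing $Q_n=1$ for large $n$, which is clean because the exponent sequence is a product of integers $\ge1$. It is also the only place where boundedness enters.
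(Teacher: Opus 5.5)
Your proof is correct and follows essentially the same route as the paper. Boundedness forces $Q_n=1$ for large $n$, hence $I_n=\{\kappa\}$ with $\kappa$ the index of largest $A_i$ among those with $q_i=1$, and passing to the limit in $A_i+q_i\theta_n\le M+\theta_n$ gives the stated inequalities. Your variants are fine: you get $\sigma\le1$ from $\theta_n<1$ rather than from $M=d_\kappa-1\le d-1$, and you get $M>A_i$ directly from the strict inequality at a fixed large $n$ (using $\theta_n\ge0$ and $q_i\ge1$) instead of the paper's case split on $q_i$. The aside in Step 1 that no $Q_j\ge p$ occurs at all is not justified by your reasoning there, which only yields finitely many such $j$, but it is never used.
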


\begin{proof}
Since $|e|_v>1$ and $|u_n|_v=|e|_v^{Q_1\cdots Q_{n-1}}$ is bounded, we have $Q_n=1$ for all sufficiently large $n$. For such $n$, every $i\in I_n$ satisfies $q_i=1$. Let $1 \le \kappa \le r$ be the unique index maximizing $A_i=d_i-1$ among all indices with $q_i=1$. Then we have
\begin{equation*}
I_n=\{\kappa\},\quad
\theta_{n+1}=\frac{M+\theta_n}{d},\quad
\lim_{n\to\infty}\theta_n=\sigma.
\end{equation*}
For every $i$, the inequality $A_i+q_i\theta_n\le M+\theta_n$ holds for all sufficiently large $n$. Taking $n \to \infty$ limits gives
\begin{equation*}
A_i+q_i\sigma\le M+\sigma=d\sigma.
\end{equation*}
Non-additivity gives $A_i>0$ for some $i$, hence $\sigma>0$. Also $M=d_\kappa-1\le d-1$, so $\sigma\le1$. Finally, if $q_i>1$, then
\begin{equation*}
A_i\le(d-q_i)\sigma=M-(q_i-1)\sigma<M.
\end{equation*}
If $q_i=1$ and $i\ne \kappa$, then $A_i<M$ by the choice of $\kappa$.
\end{proof}

\section{Simultaneous preperiodicity and translation groups}\label{Sec3}
In this section, we prove Theorem \ref{thm11}. First we prove that \eqref{eq2c} implies that $e=f(\beta)-f(\alpha) \in k$.

\begin{lemma}\label{lem31}
Assume that \eqref{eq2c} holds and $e\ne0$. Suppose that there is an index $\kappa$ satisfying \eqref{eq2g}. Then, for every $v\in S$, we have
\begin{equation*}
|e|_v\le C_v^{\sigma(1-M)}\le1.
\end{equation*}
\end{lemma}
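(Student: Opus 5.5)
The plan is to argue by contradiction: suppose $v\in S$ and $|e|_v>C_v^{\sigma(1-M)}$. Write $C=C_v>1$. By the symmetry of \eqref{eq2c} and of the conclusion under swapping $\alpha,\beta$ (which only changes the sign of $e$), assume $|\alpha|_v=C$.

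\textbf{Step 1: the fixed-point parameter.} Take $\lambda_0:=\alpha-f(\alpha)$, so that $f_{\lambda_0}(\alpha)=\alpha$ and $\hh_{v,\lambda_0}(\alpha)=0$. By \eqref{eq2c} we get $\hh_{v,\lambda_0}(\beta)=0$. By the escape estimate, the whole $f_{\lambda_0}$-orbit of $\beta$ then lies in the disk of radius $R_{\lambda_0}\le C$, since $|\lambda_0|_v\le C^d$.

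\textbf{Step 2: the difference recursion.} Set $\delta_n:=f_{\lambda_0}^n(\beta)-\alpha$. Then $\delta_1=f(\beta)+\lambda_0-\alpha=e$, and
\begin{equation*}
\delta_{n+1}=f(\alpha+\delta_n)-f(\alpha)=\sum_{i=1}^r c_i\sum_{j=1}^{s_i}\binom{s_i}{j}\alpha^{q_i(s_i-j)}\delta_n^{q_ij}.
\end{equation*}
Boundedness of the orbit and $|\alpha|_v=C$ give $|\delta_n|_v\le C$ for all $n$.

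\textbf{Step 3: dominance of the $\kappa$-term.} Write $|\delta|_v=C^x$. The $(i,j)$-term has absolute value $C^{d_i-q_ij+q_ijx}$. The $(\kappa,1)$-term equals $d_\kappa\alpha^{M}\delta$, which is nonzero because $q_\kappa=1$, and has absolute value $C^{M+x}$.
\begin{itemize}
\item For $j=1$, the $(i,1)$-term is strictly smaller than the $(\kappa,1)$-term iff $A_i+q_ix<M+x$. By \eqref{eq2g} and Lemma \ref{lem27}, $A_i\le M-(q_i-1)\sigma$, with strict inequality $A_i<M$ when $q_i=1$ and $i\ne\kappa$. Hence this holds for every $i\ne\kappa$ as soon as $x<\sigma$.
\item For $j\ge2$, the exponent $d_i-q_ij(1-x)$ is at most the $j=1$ exponent when $x\le1$. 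These terms are therefore also dominated.
\end{itemize}
So on the window $x<\sigma$ we get the exact rule $|\delta_{n+1}|_v=C^M|\delta_n|_v$. In particular $\delta_n$ never becomes $0$.

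Starting from $x_1>\sigma(1-M)$, the exponent grows by $M$ at each step while it stays below $\sigma$. It therefore reaches some $x_n\ge\sigma$; note that $\sigma(1-M)+M\ge\sigma$.

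\textbf{Step 4: the regime $x\ge\sigma$ (main obstacle).} We must show that once $|\delta_n|_v\ge C^\sigma$, the orbit is forced out of the disk of radius $C$, contradicting Step 1.

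The approach is to use the largest exponent among the leading $(i,1)$ and $(i,s_i)$ terms. We use the distinctness of the $q_i$ among tied indices, exactly as in Lemma \ref{lem26}, to get a unique dominant term. We also use $\sigma=M/(d-1)$, i.e.\ $M+\sigma=d\sigma$. Together these should show that the exponent map $x\mapsto\max_{i,j}(d_i-q_ij+q_ijx)$ satisfies $x_{n+1}\ge x_n+c$ for a fixed $c>0$ whenever $\sigma\le x_n\le1$.

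Alternatively, if $|\delta_n|_v>C^\sigma$, one can compare with the $(r,s_r)$ term $\delta^d$, whose size $C^{dx}$ eventually beats everything and gives $|\delta_{n+1}|_v>C$. The boundary case $x_n=\sigma$ exactly needs separate care, where ties are possible. I expect to handle it by perturbing the choice of $\lambda_0$ within a small disk, or by invoking the uniqueness-of-maximal-term argument.

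Granting this step, $x_n$ eventually exceeds $1$, contradicting $|\delta_n|_v\le C$. Hence $|e|_v\le C^{\sigma(1-M)}$. Finally, $C^{\sigma(1-M)}\le1$ because $C>1$, $\sigma>0$ and $M\ge1$.
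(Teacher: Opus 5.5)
Your Steps 1--3 are sound, and they already settle every case but one. If $\sigma(1-M)<x_1<\sigma$, then $x_2=x_1+M>\sigma+M(1-\sigma)\ge\sigma$ strictly. If $x_1>\sigma$, the orbit escapes at once, since $|F_\alpha(Z)|_v=|Z|_v^d$ for $|Z|_v>C^\sigma$, where $F_\alpha(Z):=f(\alpha+Z)-f(\alpha)$. So the whole content of the lemma is the boundary case $|e|_v=C^\sigma$, which is exactly what your Step 4 leaves open. This case \emph{cannot} be handled with the single parameter $\lambda_0$.

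The nonzero roots of $F_\alpha$ are the differences $x_j-\alpha$, where $x_1,\dots,x_d$ are the roots of $f(X)=f(\alpha)$, and all of them have absolute value exactly $C^\sigma$. Suppose $x_j\ne\alpha$ and $\beta$ is a root of $f(X)=f(\alpha)+x_j-\alpha$. Then $|\beta|_v=C$ and $e=x_j-\alpha$, and under $f_{\lambda_0}$ we get $\beta\mapsto x_j\mapsto\alpha\mapsto\alpha$. The orbit is bounded, yet $|e|_v=C^\sigma>1$. So everything you extract from $\lambda_0$ is consistent with the case you need to exclude. Your claimed increment $x_{n+1}\ge x_n+c$ fails at $x_n=\sigma$. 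No uniqueness-of-maximal-term argument can rescue it: at $|Z|_v=C^\sigma$ the linear term $b_1Z$ (size $C^{M+\sigma}$) and the top term $Z^d$ (size $C^{d\sigma}$) have the same size and genuinely cancel at these roots. Perturbing $\lambda_0$ helps only if the new parameter still keeps the orbit of $\alpha$ bounded, since that is what lets you invoke \eqref{eq2c}. Your proposal does not say which parameters to take or how to combine the information they give.

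The missing idea is to use all $d$ parameters $\lambda_j:=x_j-f(\alpha)$. Under $f_{\lambda_j}$ we have $\alpha\mapsto x_j\mapsto x_j$, so $\alpha$ is preperiodic, and \eqref{eq2c} makes the $F_{x_j}$-orbit of $f_{\lambda_j}(\beta)-x_j=e$ bounded. Hence $|D_ef(x_j)|_v=|F_{x_j}(e)|_v\le C^\sigma$ for every $j$. The $x_j$ are pairwise at distance exactly $C^\sigma$ (distinctness comes from $|f'(x_j)|_v=C^M>0$). The polynomial $D_ef$ has degree $M\le d-1$ and leading coefficient $c_\kappa s_\kappa e$ with $|c_\kappa s_\kappa|_v=1$. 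Lagrange interpolation at $x_1,\dots,x_{M+1}$ then gives
\begin{equation*}
|e|_v\le C^{-\sigma M}\max_j|D_ef(x_j)|_v\le C^{\sigma(1-M)}
\end{equation*}
directly and uniformly, with no case analysis on $|e|_v$. A single fixed point controls only the value $D_ef(\alpha)$, which carries no information about the leading coefficient of $D_ef$.
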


\begin{proof}
Fix $v\in S$, write $C=C_v$ and $R=C^\sigma>1$. For $|x|_v=C$, write
\begin{equation*}
F_x(Z):=f(x+Z)-f(x)=\sum_{j=1}^d b_j(x)Z^j,\quad b_d(x)=1.
\end{equation*}
The $i$th monomial contributes a nonzero term to $b_j(x)$ only if $j\ge q_i$. Since $\sigma\le1$, \eqref{eq2g} gives
\begin{equation*}
d_i-j=A_i-(j-q_i)
\le(d-q_i)\sigma-(j-q_i)
\le(d-j)\sigma.
\end{equation*}
Since $q_\kappa=1$ and $A_\kappa>A_i$ for every $i\ne\kappa$, we obtain
\begin{equation}\label{eq3a}
|b_j(x)|_v\le R^{d-j}\quad(1\le j\le d),\quad
|b_1(x)|_v=|c_\kappa s_\kappa x^M|_v=C^M=R^{d-1}.
\end{equation}
Comparing the terms of $F_x$ using \eqref{eq3a} gives
\begin{equation*}
\begin{aligned}
0<|Z|_v<R&\quad\Rightarrow\quad |F_x(Z)|_v=R^{d-1}|Z|_v,\\
|Z|_v>R&\quad\Rightarrow\quad |F_x(Z)|_v=|Z|_v^d.
\end{aligned}
\end{equation*}
Thus every nonzero root of $F_x$ has absolute value $R$, and every bounded orbit under $F_x$ is contained in the disk $|Z|_v\le R$.

Let $x_1,\ldots,x_d$ be the roots of $f(X)=f(\alpha)$ counted with multiplicity. Since $|f(\alpha)|_v=C^d$, $|x_j|_v=C$ for each $j$. Moreover, \eqref{eq3a} gives $|f'(x_j)|_v=|b_1(x_j)|_v>0$, so they are distinct. For $i\ne j$, the difference $x_i-x_j$ is a nonzero root of $F_{x_j}$ so
\begin{equation}\label{eq3b}
|x_i-x_j|_v=R\quad(i\ne j).
\end{equation}
For each $j$, set $\lambda_j:=x_j-f(\alpha)=x_j-f(x_j)$ so that $f_{\lambda_j}^2(\alpha)=f_{\lambda_j}(\alpha)=x_j$. By \eqref{eq2c}, $\hh_{v,\lambda_j}(\beta)=0$ so the orbit of $\beta$ under $f_{\lambda_j}$ is bounded. By the relation
\begin{equation*}
f_{\lambda_j}^{n+1}(\beta)-x_j
=F_{x_j}(f_{\lambda_j}^n(\beta)-x_j),
\end{equation*}
the orbit of $f_{\lambda_j}(\beta)-x_j=e$ under $F_{x_j}$ is bounded. Hence
\begin{equation}\label{eq3c}
|D_ef(x_j)|_v = |F_{x_j}(e)|_v \le R \quad(1\le j\le d).
\end{equation}

For each $i$, the polynomial $(X+e)^{d_i}-X^{d_i}$ has degree $A_i$ and leading coefficient $s_i e^{q_i}$. Since $A_\kappa>A_i$ for every $i\ne\kappa$ and $q_\kappa=1$, the polynomial $H(X):=D_ef(X)$ satisfies
\begin{equation*}
\deg_XH=M<d,\quad \lc_XH=c_\kappa s_\kappa e.
\end{equation*}

Since $\deg_XH=M$, Lagrange interpolation at $x_1,\ldots,x_{M+1}$ gives
\begin{equation*}
\lc_XH =\sum_{j=1}^{M+1}\frac{H(x_j)}{\prod_{1 \le \ell \le M+1, \ell\ne j}(x_j-x_\ell)}.
\end{equation*}
By \eqref{eq3b}, each denominator has absolute value $R^M$. Hence \eqref{eq3c} gives
\begin{equation*}
|e|_v=|\lc_XH|_v \le R^{-M}\max_{1\le j\le M+1}|H(x_j)|_v
\le R^{1-M}=C^{\sigma(1-M)}\le1. \qedhere
\end{equation*}
\end{proof}

\begin{theorem}\label{thm32}
Let $f\in k[X]$ be non-additive. If \eqref{eq2c} holds, then $e \in k$.
\end{theorem}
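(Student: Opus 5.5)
We argue by contradiction, using \eqref{eq2b}. Suppose $e\notin k$. Then $e\neq 0$ and $|e|_v>1$ for some place $v\in\Omega_K$. We fix such a $v$ and aim for a contradiction.

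First we show that $v\in S$, i.e.\ $C_v>1$. If $C_v=1$, then $|\alpha|_v,|\beta|_v\le1$, so $|f(\alpha)|_v,|f(\beta)|_v\le1$ because $f$ has constant coefficients. This would give $|e|_v\le 1$, a contradiction. Hence $C_v>1$ and $v\in S$.

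Next, since $|e|_v>1$, Lemma \ref{lem26} applies at $v$. For every $n\ge1$, $P_n\ne0$ and $|u_n|_v=|e|_v^{Q_1\cdots Q_{n-1}}$. By Proposition \ref{prop25} (which uses only \eqref{eq2c}), $|u_n|_v=|\lc_TP_n|_v\le C_v$ for all $n$. So the sequence $(|u_n|_v)$ is bounded.

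Since $f$ is non-additive, Lemma \ref{lem27} now produces an index $\kappa$ satisfying \eqref{eq2g}. With this $\kappa$, the hypotheses of Lemma \ref{lem31} hold: \eqref{eq2c} is assumed, $e\neq0$, and \eqref{eq2g} is satisfied. Lemma \ref{lem31} then gives $|e|_w\le C_w^{\sigma(1-M)}\le 1$ for every $w\in S$, in particular for $w=v$. This contradicts $|e|_v>1$. Therefore $|e|_w\le1$ for all $w\in\Omega_K$, and \eqref{eq2b} yields $e\in k$.

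The only point needing care is the order of the deductions. The index $\kappa$ is produced from the hypothesis $|e|_v>1$ at one particular place. Lemma \ref{lem31}, however, only requires that $\kappa$ exist, and it is then applied at that same place $v$, so no uniformity across places is needed.
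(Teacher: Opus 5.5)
Your proposal is correct and follows the paper's proof step for step. You pick $v$ with $|e|_v>1$ via \eqref{eq2b}, check $v\in S$, use Lemma \ref{lem26} and Proposition \ref{prop25} to bound $|u_n|_v$, and then apply Lemma \ref{lem27} and Lemma \ref{lem31} at that same $v$ to reach the contradiction. Your ultrametric check that $C_v=1$ would force $|e|_v\le1$ is the same observation as the paper's bound $|e|_v\le C_v^d$.
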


\begin{proof}
Suppose that $e\notin k$. By \eqref{eq2b}, there exists $v\in\Omega_K$ such that $|e|_v>1$. Since $|e|_v\le C_v^d$, we have $v\in S$.
By Lemma \ref{lem26}, we have $P_n\ne0$ for every $n\ge1$. Proposition \ref{prop25} therefore gives $|u_n|_v\le C_v$ for every $n\ge1$ so the sequence $(|u_n|_v)_{n\ge1}$ is bounded. By Lemma \ref{lem27}, there is an index $1 \le \kappa \le r$ satisfying \eqref{eq2g}. Applying Lemma \ref{lem31} at $v$ gives $|e|_v\le1$, contradicting the choice of $v$.
\end{proof}

\subsection{Proof of Theorem \ref{thm11}}\label{Sub31}

For $c\in k$, write
\begin{equation} \label{eq3n1}
R_c(X):=f(X+D_cf(X))-f(X).
\end{equation}
In the following lemma, the zero polynomial is included among polynomials of degree at most one.

\begin{lemma}\label{lem33}
Let $c\in k$. If $\deg_XR_c\le1$ and $\deg_X R_{-c}\le1$, then $D_cf$ is constant.
\end{lemma}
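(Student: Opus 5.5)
The plan is to write $H:=D_cf$ and $h:=\deg_XH$, and to show first by a degree count that $h\le1$. After that, the two hypotheses on $R_c$ and $R_{-c}$ should rule out $h=1$. We keep the normalization that $f$ is monic of degree $d\ge2$ with $f(0)=0$. Expand
\[
f(X+Y)-f(X)=\sum_{j=1}^d b_j(X)Y^j,\qquad b_d=1,\quad \deg b_j\le d-j,
\]
so that $R_c(X)=\sum_j b_j(X)H(X)^j$. If $h\ge2$, the $j=d$ term has degree $dh$. Every other term has degree at most $d-j+jh$, and $dh-(d-j+jh)=(d-j)(h-1)>0$ for $j<d$. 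Hence $\deg R_c=dh\ge4$, contradicting the hypothesis, so $h\le1$. If $h=0$ we are done, so it remains to exclude $H(X)=aX+b$ with $a\ne0$.

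In that case $R_c(X)=f(uX+b)-f(X)$ with $u=1+a$. Since $\deg R_c\le1<d$, comparing $X^d$-coefficients gives $u^d=1$, and in particular $u\ne0$. For the second hypothesis, note that $D_{-c}f(X)=-D_cf(X-c)=-aX+(ac-b)$. Thus $R_{-c}(X)=f(u'X+b')-f(X)$ with $u'=1-a$, $b'=ac-b$, and likewise $u'^d=1$.

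The cleanest way to organize the remaining contradiction is through the set $\Gamma$ of affine maps $\psi$ with $f\circ\psi-f$ of degree at most $1$. If $f\circ\psi_i=f+\ell_i$, then $f\circ\psi_1\circ\psi_2=f+\ell_2+\ell_1\circ\psi_2$, so $\Gamma$ is a group. It contains the translation $\tau_c$ together with $\psi=uX+b$ and $\psi'=u'X+b'$. On the translation subgroup $T=\{t:\deg D_tf\le1\}$, the slope map $t\mapsto a(t)$ is additive, because $D_{t+s}f=D_tf(X+s)+D_sf$. Conjugating by $\psi$ gives $\psi\tau_t\psi^{-1}=\tau_{ut}$, and a short computation gives $a(ut)=a(t)/u$.

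Applying this with $t=c$ gives $a(u^nc)=u^{-n}a$ for all $n$, and the same holds with $u'$ in place of $u$. I would then combine these with additivity, using $u+u'=2$ and $uc-c=ac$, to get
\[
a(ac)=a(uc)-a(c)=a\Bigl(\frac1u-1\Bigr)=-\frac{a^2}{u},
\]
and symmetrically $a(-ac)=a(u'c)-a(c)=a\bigl(\frac1{u'}-1\bigr)=\frac{a^2}{u'}$. Since $a(-ac)=-a(ac)$, these combine to $\frac{a^2}{u'}=\frac{a^2}{u}$, hence $u=u'$, i.e. $2a=0$. When $p\ne2$ this contradicts $a\ne0$.

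The main obstacle is this $h=1$ step, especially the case $p=2$, where $u=u'$ and the two hypotheses coincide. There I would instead compare the $X^{d-1}$ (or the highest non-$p$-power-degree) coefficients of $f(uX+b)-f(X)$ and of $f(X+c)-f(X)$, using $u^d=1$ and $u\ne1$. Alternatively I would iterate the relation $a(u^nc)=u^{-n}a$ over the finite orbit of $u$, to force $a=0$. I expect some care is needed to show that $H$ is then genuinely constant rather than merely of slope zero.
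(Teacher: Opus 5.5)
Your degree count for $h\ge2$ is fine. For $h=1$, the group $\Gamma$, the additivity of the slope map $z\mapsto a(z)$ on $T$, and the conjugation rule $a(uz)=a(z)/u$ are all correct, and they do give $u=u'$, so your argument is complete when $p\ne2$. The genuine gap is $p=2$, which you leave open, and the tools you list cannot close it. In characteristic $2$ you have $u'=u$ and $\psi'=\tau_{ac}\circ\psi$, with $ac=uc-c$ already in $T$. So the hypothesis on $R_{-c}$ adds nothing, and all you have is additivity together with $a(uz)=a(z)/u$ (and $u^d=1$). These relations are consistent with $a\ne0$. On $T=\mathbb{F}_4$ take $c=1$ and $u=\omega$ with $\omega^2+\omega+1=0$, so that $a=u-1=\omega^2$, and take the additive map $z\mapsto\omega^2z^2$. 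It satisfies $a(1)=\omega^2$ and $a(\omega z)=\omega z^2=a(z)/\omega$, and $u^d=1$ whenever $3\mid d$. So iterating $a(u^nc)=u^{-n}a$ over the finite orbit of $u$ cannot force $a=0$, and the proposed comparison of $X^{d-1}$ coefficients is not an argument as it stands.

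The missing ingredient, which is the core of the paper's proof, is that translation differences commute: $D_y(D_zf)=D_z(D_yf)$, both being $f(X+y+z)-f(X+y)-f(X+z)+f(X)$. For $y,z\in T$ this reads $a(z)\,y=a(y)\,z$, which is exactly what the model above violates. Taking $z=c$, $y=uc$ and your relation $a(uc)=a/u$ gives $a\,uc=(a/u)\,c$, hence $(1+a)^2=u^2=1$. The same argument for $-c$ gives $(1-a)^2=1$. Then $a(a+2)=a(a-2)=0$ with $a\ne0$ forces $a=2=-2$, so $p=2$ and $a=2=0$, a contradiction. In characteristic $2$ the relation $(1+a)^2=1+a^2=1$ alone already gives $a=0$. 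The paper does exactly this without introducing $\Gamma$. With $t=1+a$ (your $u$), it computes $D_{tc}f(tX+b)=D_cf(X)+rc$, so $D_{tc}f$ has slope $a/t$, and then compares $D_{tc}(D_cf)=atc$ with $D_c(D_{tc}f)=ac/t$.
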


\begin{proof}
If $\deg_XD_cf=m\ge2$, then $\deg_XR_c=dm>1$, a contradiction. Thus $\deg_XD_cf\le1$. Suppose that $D_cf(X)=aX+b$ with $a\ne0$. Then $c \ne 0$. Write $R_c(X)=rX+s$ and set $t=1+a$. By \eqref{eq3n1}, we have
\begin{equation*}
f(tX+b)-f(X)=rX+s.
\end{equation*}
Here $t\ne0$, since otherwise the left-hand side has degree $d\ge2$. A direct computation gives
\begin{equation*}
D_{tc}f(tX+b)=(f(X+c)+r(X+c)+s)-(f(X)+rX+s)=aX+b+rc.
\end{equation*}
Thus $D_{tc}f$ is affine with slope $a/t$. Since translation differences commute,
\begin{equation*}
atc=D_{tc}(D_cf)(X)=D_c(D_{tc}f)(X)=\frac{ac}{t}.
\end{equation*}
As $ac\ne0$, we obtain $(1+a)^2=t^2=1$.

Since $\deg_XR_{-c}\le1$ and
\begin{equation*}
D_{-c}f(X)=-D_cf(X-c)=-aX+ac-b,
\end{equation*}
the same argument with $c$ replaced by $-c$ gives $(1-a)^2=1$. Now we have $(1+a)^2=(1-a)^2=1$ and $a \ne 0$, which is impossible.
\end{proof}

\begin{proposition}\label{prop34}
Assume that \eqref{eq2c} holds and that $\alpha,\beta$ are not both in $k$. If
\begin{equation*}
e=f(\beta)-f(\alpha)\in k,
\end{equation*}
then $e\in E_f$.
\end{proposition}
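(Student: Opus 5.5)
The plan is to use the two fixed-point parameters $\lambda_\alpha:=\alpha-f(\alpha)$ and $\lambda_\beta:=\beta-f(\beta)$. They turn the boundedness of one orbit into the boundedness of an orbit of the constant $\pm e$ under a translated map. Constant-coefficient polynomials evaluated at $\alpha$ or $\beta$ then have absolute value $C_v^{\deg}$, so boundedness becomes a degree condition, and Lemma \ref{lem33} turns that into $e\in G_f$. Iterating the argument gives $e\in E_f$.

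Since $\alpha,\beta$ are not both in $k$, the set $S$ is nonempty, and we fix $v\in S$ and write $C=C_v>1$. Then $|\alpha|_v=|\beta|_v=C$. For $x\in\{\alpha,\beta\}$ put $F_x(Z):=f(x+Z)-f(x)=\sum_j b_j(x)Z^j$. Each $b_j\in k[X]$ has degree at most $d-j$, so $|b_j(x)|_v\le C^{d-j}$. Hence $|Z|_v>C$ implies $|F_x(Z)|_v=|Z|_v^d$, and every bounded $F_x$-orbit lies in the disk $|Z|_v\le C$.

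For $\lambda=\lambda_\alpha$ the point $\alpha$ is fixed, so $\hh_{v,\lambda_\alpha}(\alpha)=0$, and by \eqref{eq2c} the orbit of $\beta$ is bounded. The identity $f_{\lambda_\alpha}^{n+1}(\beta)-\alpha=F_\alpha(f_{\lambda_\alpha}^n(\beta)-\alpha)$, together with $f_{\lambda_\alpha}(\beta)-\alpha=e$, shows that the $F_\alpha$-orbit of $e$ is bounded. Symmetrically, using $\lambda_\beta$, the $F_\beta$-orbit of $-e$ is bounded. Moreover, whenever $c\in k$ and $P\in k[X]$ is nonzero, we have $|P(\alpha)|_v=C^{\deg P}$, since the coefficients of $P$ are units or zero and $C>1$.

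Now $F_\alpha(e)=D_ef(\alpha)$ and $F_\alpha^2(e)=R_e(\alpha)$ with $R_e$ as in \eqref{eq3n1}. Both have coefficients in $k$ and lie in the disk of radius $C$, so $\deg R_e\le1$. Likewise $F_\beta^2(-e)=R_{-e}(\beta)$ gives $\deg R_{-e}\le 1$. Lemma \ref{lem33} then shows that $D_ef$ is constant, i.e.\ $e\in G_f$.

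For the iteration, set $e_0:=e$ and $e_{j+1}:=\Delta_f(e_j)$ while defined. For $c\in G_f$ we have $D_{-c}f(X)=-D_cf(X-c)=-\Delta_f(c)$, so $-c\in G_f$ and $\Delta_f(-c)=-\Delta_f(c)$. Suppose inductively that $e_0,\dots,e_n\in G_f$. Then
\begin{equation*}
F_\alpha^{j}(e)=e_j,\qquad F_\beta^{j}(-e)=-e_j\qquad(0\le j\le n+1),
\end{equation*}
and therefore
\begin{equation*}
F_\alpha^{n+3}(e)=R_{e_{n+1}}(\alpha),\qquad F_\beta^{n+3}(-e)=R_{-e_{n+1}}(\beta).
\end{equation*}
Boundedness again forces $\deg R_{\pm e_{n+1}}\le1$, and Lemma \ref{lem33} gives $e_{n+1}\in G_f$. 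By induction every $\Delta_f^n(e)$ lies in $G_f$, so $e\in E_f$.

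The main obstacle is producing the degree bound on $R_{-c}$ as well as on $R_c$, since Lemma \ref{lem33} needs both. This is exactly why the second fixed-point parameter $\lambda_\beta$ is used, together with the oddness $\Delta_f(-c)=-\Delta_f(c)$ to keep the two orbits synchronized.
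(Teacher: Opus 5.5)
Your proposal is correct and follows essentially the same route as the paper: the fixed-point parameters $\lambda_\alpha,\lambda_\beta$, the bound $|H(x)|_v=C^{\deg H}$ forcing $\deg R_{\pm c}\le 1$, and Lemma \ref{lem33}. The differences are cosmetic. The paper argues by contradiction from the first index $j$ with $c_j\notin G_f$ and uses the identity \eqref{eq3d}, valid for all $\lambda$, so the $\beta$-side comes for free; you instead run a forward induction with the translated maps $F_\alpha,F_\beta$, which is why you need the oddness $\Delta_f(-c)=-\Delta_f(c)$, and you verify it correctly.
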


\begin{proof}
Fix $v\in S$ and write $C=C_v>1$. Let $c_0=e$ and $c_{i+1}=f(c_i)$ for each $i\ge0$.

Suppose that $e\notin E_f$. Since $\Delta_f(c)=f(c)$ for $c\in G_f$, the definition of $E_f$ gives an index $j\ge0$ such that $c_i\in G_f$ for all $i<j$ and $c_j\notin G_f$. 
Thus for every $\lambda \in L$, we have
\begin{equation}\label{eq3d}
f_\lambda^{i+1}(\beta)-f_\lambda^{i+1}(\alpha)=c_i \quad(0\le i\le j).
\end{equation}
Let $n=j+1$, $c=c_j$ and set $\lambda_\alpha:=\alpha-f(\alpha)$ and $\lambda_\beta:=\beta-f(\beta)$. By \eqref{eq2c}, $\hh_{v,\lambda_\alpha}(\beta)=\hh_{v,\lambda_\beta}(\alpha)=0$. Lemma \ref{lem24}, together with $|\alpha|_v=|\beta|_v=C$, implies that
\begin{equation} \label{eq3n2}
|f_{\lambda_\alpha}^m(\beta)|_v\le C \quad \text{and} \quad 
|f_{\lambda_\beta}^m(\alpha)|_v\le C\quad \text{for every } m \ge 0.
\end{equation}

Applying \eqref{eq3d} to $\lambda=\lambda_\alpha$ and $i=j$, we obtain $f_{\lambda_\alpha}^n(\beta)=\alpha+c$, so
\begin{equation*}
\begin{aligned}
f_{\lambda_\alpha}^{n+1}(\beta) &=f(\alpha+c)+\alpha-f(\alpha)=\alpha+D_cf(\alpha), \\
f_{\lambda_\alpha}^{n+2}(\beta) &= f(\alpha+D_cf(\alpha))+\alpha-f(\alpha)=R_c(\alpha)+\alpha.
\end{aligned}
\end{equation*}
Hence $R_c(\alpha)=f_{\lambda_\alpha}^{n+2}(\beta)-\alpha$, and the same argument gives $R_{-c}(\beta)=f_{\lambda_\beta}^{n+2}(\alpha)-\beta$. Now \eqref{eq3n2} gives
\begin{equation*}
|R_c(\alpha)|_v, |R_{-c}(\beta)|_v \le C.
\end{equation*}
For every nonzero $H\in k[X]$ and $x\in\{\alpha,\beta\}$, we have $|H(x)|_v=C^{\deg_X H}$. Hence both $R_c$ and $R_{-c}$ have degree at most one. Lemma \ref{lem33} gives $D_cf\in k$, contradicting $c\notin G_f$. Thus $e\in E_f$.
\end{proof}

\begin{proof}[Proof of Theorem \ref{thm11}]
By \eqref{eq2a}, we may assume that $f$ is monic and $f(0)=0$. Suppose that (i) holds. If $\alpha, \beta$ are not both in $k$, then Theorem \ref{thm22} gives \eqref{eq2c}, Theorem \ref{thm32} gives $e\in k$, and Proposition \ref{prop34} gives $e\in E_f$. Thus (i) implies (iii).

Suppose that (iii) holds. If $\alpha,\beta \in k$, then Lemma \ref{lem21} gives $\PP_f(\alpha)=\PP_f(\beta)=k$. Otherwise, $e \in E_f$. Set $c_0=e$ and $c_{i+1}=\Delta_f(c_i)$ for each $i \ge 0$. For every $\lambda\in L$ and $n \ge 1$, we have
\begin{equation*}
f_\lambda^n(\beta)-f_\lambda^n(\alpha)=c_{n-1}.
\end{equation*}
Choose a finite subfield of $k$ containing $e$ and the coefficients of $f$. Since $c_{n+1}=f(c_n)$, all $c_n$ belong to this field. Consequently, the orbit of $\alpha$ under $f_\lambda$ is finite if and only if the orbit of $\beta$ under $f_\lambda$ is finite, which proves (ii). Finally, (ii) implies (i) by Lemma \ref{lem21}.
\end{proof}

\subsection{A procedure for computing \texorpdfstring{$E_f$}{Ef}}\label{Sub32}

For $f\in k[X]$, define subsets $G^{(n)}\subseteq G_f$ recursively by
\begin{equation*}
G^{(0)}:=G_f,\quad
G^{(n+1)}:=\{c\in G^{(n)}:\Delta_f(c)\in G^{(n)}\}
\quad(n\ge0).
\end{equation*}

\begin{proposition}\label{prop35}
Let $f(X)=\sum_{i=1}^d a_iX^i\in k[X]$ be a non-additive polynomial of degree $d\ge2$. For $1\le j<d$, define
\begin{equation*}
B_j(Y):=\sum_{i=j+1}^d a_i\binom{i}{j}Y^{i-j} \in k[Y].
\end{equation*}
Then the following hold.
\begin{enumerate}[label=(\alph*)]
\item $G_f$ is a finite $\F_p$-vector space, and the map $\Delta_f:G_f\to k$ is $\F_p$-linear. Moreover, we have
\begin{equation*}
G_f=\bigcap_{j=1}^{d-1}\{c\in k:B_j(c)=0\} \quad \text{and} \quad 
|G_f|\le d-1.
\end{equation*}

\item Each $G^{(n)}$ is an $\F_p$-vector subspace of $G_f$, and $E_f=G^{(\dim_{\F_p}G_f)}$. In particular, $E_f$ can be determined by solving finitely many polynomial equations over a finite field.
\end{enumerate}
\end{proposition}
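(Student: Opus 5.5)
Expand $f(X+c)-f(X)$ by the binomial theorem:
\begin{equation*}
D_cf(X)=\sum_{i=1}^d a_i\bigl((X+c)^i-X^i\bigr)=\sum_{j=0}^{d-1}X^j\sum_{i=j+1}^d a_i\binom{i}{j}c^{i-j}.
\end{equation*}
For $1\le j<d$ the coefficient of $X^j$ is exactly $B_j(c)$. Since $f(0)=0$, the constant term is $f(c)$. Hence $c\in G_f$ if and only if $B_j(c)=0$ for all $1\le j\le d-1$, which is the displayed description of $G_f$. On $G_f$ we also get $\Delta_f(c)=f(c)$.

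\emph{Subgroup property.} For $c,c'\in G_f$,
\begin{equation*}
D_{c+c'}f(X)=D_cf(X+c')+D_{c'}f(X)
\end{equation*}
is constant, so $c+c'\in G_f$, and $\Delta_f(c+c')=\Delta_f(c)+\Delta_f(c')$. Also $D_{-c}f(X)=-D_cf(X-c)$ is constant, so $-c\in G_f$. Thus $G_f$ is an additive subgroup of $k$, hence an $\F_p$-vector space, and $\Delta_f$ is additive, hence $\F_p$-linear.

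\emph{Finiteness and the bound $|G_f|\le d-1$.} This is the main point, and it is where non-additivity enters. Because $f$ is non-additive, there is an index $i\ge2$ with $a_i\ne0$ and $i$ not a power of $p$. Choose the largest such $i$ and write $i=q s$ with $q=p^{v_p(i)}$, $s>1$, $p\nmid s$. Take $j:=i-q$, so $1\le j<d$. Then $\binom{i}{j}=\binom{i}{q}\equiv s\not\equiv0\pmod p$ by Lucas's theorem, so $B_j$ has the nonzero term $a_i s\,Y^{q}$.

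I then need $B_j$ to be a nonzero polynomial of degree at most $d-1$. Its degree is at most $d-j\le d-1$, so it remains to check that the $Y^q$ term cannot cancel. Contributions to the $Y^q$ coefficient of $B_j$ come only from the single index $i'=j+q=i$, so the coefficient is $a_i\binom{i}{j}\neq 0$. Hence $B_j\ne0$. Its zero set contains $G_f$ and has at most $\deg B_j\le d-1$ elements, giving $|G_f|\le d-1$.

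\emph{Part (b).} Induct on $n$. If $G^{(n)}$ is a subspace, then
\begin{equation*}
G^{(n+1)}=G^{(n)}\cap\Delta_f^{-1}(G^{(n)})
\end{equation*}
is a subspace, because $\Delta_f$ is linear. The sequence $G^{(n)}$ is decreasing.

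Next I identify $E_f$ with the limit. By induction, $c\in G^{(n)}$ if and only if $\Delta_f^m(c)\in G_f$ for all $0\le m\le n$. Hence $E_f=\bigcap_n G^{(n)}$.

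Once $G^{(n+1)}=G^{(n)}$, the sequence is constant from then on. Each strict inclusion lowers the dimension by at least one, so the sequence stabilizes by step $\dim_{\F_p}G_f$. Therefore $E_f=G^{(\dim G_f)}$.

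Finally, every element of $G_f$ is a root of $B_j$, which has coefficients in a finite field. So $G_f$ lies in a finite field and is computable there, and $E_f$ is obtained from it by finitely many evaluations of $f$.
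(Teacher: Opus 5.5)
Your proof is correct and follows the same structure as the paper's proof. This covers the coefficient description of $G_f$, the identity $D_{c+c'}f(X)=D_cf(X+c')+D_{c'}f(X)$ for linearity, and the descending chain of subspaces stabilizing by step $\dim_{\F_p}G_f$. The one local difference is that you exhibit an explicit nonzero $B_j$ (with $j=i-q$, whose $Y^q$-coefficient $a_i s$ is nonzero by Lucas's theorem). The paper instead notes that if every $B_j$ vanished, then $f(X+Y)-f(X)=f(Y)$, contradicting non-additivity directly from the definition.
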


\begin{proof}
\begin{enumerate}[label=(\alph*)]
\item For $1\le j<d$, the coefficient of $X^j$ in $D_Yf(X)$ is $B_j(Y)$, which gives the stated formula for $G_f$. At least one $B_j$ is nonzero, since otherwise $f(X+Y)-f(X)=f(Y)$ and $f$ would be additive. For such an index $j$, we have $\deg B_j\le d-j\le d-1$. Thus $G_f$ is finite and $|G_f|\le d-1$.
For $c,c'\in G_f$, we have
\begin{equation*}
D_{c+c'}f(X)=D_cf(X+c')+D_{c'}f(X)=\Delta_f(c)+\Delta_f(c').
\end{equation*}
Since $0\in G_f$, $G_f$ is an $\F_p$-vector space and the map $\Delta_f$ is $\F_p$-linear. 

\item Write $h:=\dim_{\F_p}G_f$. By the linearity of $\Delta_f$, the sets $G^{(n)}$ form a descending chain of $\F_p$-vector subspaces of $G_f$. Since $\Delta_f(c)=f(c)$ for $c\in G_f$, we also have
\begin{equation*}
G^{(n)}=\{c\in G_f:f^i(c)\in G_f\text{ for }0\le i\le n\}
\end{equation*}
for every $n \ge 0$. If $G^{(n+1)}=G^{(n)}$, then $\Delta_f(G^{(n)})\subseteq G^{(n)}$, so the formula above and the definition of $E_f$ give $G^{(n)}=E_f$. There are at most $h$ strict inclusions in the descending chain
\begin{equation*}
G_f=G^{(0)} \supseteq G^{(1)} \supseteq \cdots,
\end{equation*}
and therefore $G^{(h)}=E_f$. \qedhere
\end{enumerate}
\end{proof}

\section{The colliding orbits problem}\label{Sec4}

For $\alpha_1,\alpha_2,\beta\in L$, set
\begin{equation*}
C_f(\alpha_1,\alpha_2;\beta)
:=\{\lambda\in L:f_\lambda^m(\alpha_1)=f_\lambda^n(\alpha_2)=\beta
\text{ for some }m,n\ge1\}.
\end{equation*}
Asgarli and Ghioca \cite[Theorem 1.1]{AG26} gave a criterion for $C_f(\alpha_1,\alpha_2;\beta)$ to be infinite when $f(X)=X^d$, and the authors \cite[Theorem 1.8]{LN25} obtained a similar criterion for certain binomials.
In this section, we give a criterion for the infinitude of $C_f(\alpha_1,\alpha_2;\beta)$ when $f-f(0)$ is non-additive and $\alpha_1,\alpha_2,\beta$ are not all in $k$.

\subsection{Translation data and the collision criterion}\label{Sub41}

Assume that $f-f(0)$ is non-additive and $\deg f\ge2$. Since $G_f=G_{f-f(0)}$ and $E_f=E_{f-f(0)}$, Proposition \ref{prop35} shows that $E_f$ is a finite $\F_p$-vector space and $\Delta_f|_{E_f}$ is $\F_p$-linear. Write $\Delta:=\Delta_f|_{E_f}$, $\nu:=\dim_{\F_p}E_f$, and define
\begin{equation*}
N_f:=\ker\Delta^\nu,\quad W_f:=\Delta^\nu(E_f).
\end{equation*}
The kernels and images of the powers of $\Delta$ stabilize by exponent $\nu$, so
\begin{equation*}
\ker\Delta^{2\nu}=\ker\Delta^\nu,\quad \Delta^{2\nu}(E_f)=\Delta^\nu(E_f).
\end{equation*}
If $z=\Delta^\nu(y)\in N_f\cap W_f$, then $\Delta^{2\nu}(y)=\Delta^{\nu}(z)=0$, so $y\in\ker\Delta^{2\nu}=\ker\Delta^{\nu}$ and hence $z=0$. Moreover, the rank-nullity theorem gives $\dim_{\F_p}N_f+\dim_{\F_p}W_f=\nu$, so $E_f=N_f\oplus W_f$. The restriction $\Delta_W:=\Delta|_{W_f}$ is bijective. Let $\pi_f:E_f\to W_f$ be the projection with kernel $N_f$, and let $\ell_f$ be the order of $\Delta_W$.

We will repeatedly use the following polynomial identity:
\begin{equation}\label{eq4a}
f_T^n(X+z)=f_T^n(X)+\Delta^n(z) \quad(z\in E_f,\ n\ge0).
\end{equation}
The case $n=0$ is trivial. If the identity holds for $n\ge0$, then $\Delta^n(z)\in E_f\subseteq G_f$ so
\begin{equation*}
f_T^{n+1}(X+z)=f_T(f_T^n(X)+\Delta^n(z))=f_T(f_T^n(X))+\Delta(\Delta^{n}(z))=f_T^{n+1}(X)+\Delta^{n+1}(z).
\end{equation*}
Thus \eqref{eq4a} holds in $k[X,T]$ for every $n \ge 0$.

For $1\le r\le\ell_f$ and $\gamma\in W_f$, define the affine permutation and its orbit of zero by
\begin{equation*}
A_{r,\gamma}(z):=\Delta_W^r(z)+\gamma,\quad
\mathcal O_{r,\gamma}(0):=\{A_{r,\gamma}^j(0):j\ge0\}.
\end{equation*}
Since $\Delta_W$ is bijective, $A_{r,\gamma}$ is a permutation of $W_f$. For every $j\ge1$, we have
\begin{equation*}
A_{r,\gamma}^j(0)=\sum_{i=0}^{j-1}\Delta_W^{ri}(\gamma).
\end{equation*}
For $a,b\in L$, write $a\sim_f b$ if $f_T^s(a)=f_T^s(b)$ in $L[T]$ for some $s \ge 1$.

\begin{theorem}\label{thm41}
Let $f\in k[X]$ be a polynomial such that $f-f(0)$ is non-additive, and let $d:=\deg f\ge2$. Assume that $\alpha_1,\alpha_2,\beta\in L$ are not all in $k$. Then $C_f(\alpha_1,\alpha_2;\beta)$ is infinite if and only if at least one of the following conditions holds:
\begin{enumerate}[label=(\roman*)]
\item $\alpha_1\sim_f\alpha_2$;
\item For $b_i:=f(\alpha_i)-f(\beta)\in E_f$ and $w_i:=\Delta_W^{-1}(\pi_f(b_i))\in W_f$ for $i=1,2$, there exist an integer $1\le r\le\ell_f$ and $\gamma\in W_f$ such that $w_1,w_2\in\mathcal O_{r,\gamma}(0)$.
\end{enumerate}
If $\alpha_1,\alpha_2$ are not both in $k$, (i) is equivalent to the condition $f(\alpha_2)-f(\alpha_1)\in N_f$.
\end{theorem}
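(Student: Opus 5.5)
The plan is to prove both directions, treating the equivalence (i)$\Leftrightarrow f(\alpha_2)-f(\alpha_1)\in N_f$ first, since it is used throughout. After the normalization \eqref{eq2a} we assume $f$ monic with $f(0)=0$.

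\textbf{Step 1: condition (i).} If $f(\alpha_2)-f(\alpha_1)=z\in N_f\subseteq E_f$, then \eqref{eq4a} gives
\begin{equation*}
f_T^{s}(\alpha_2)=f_T^{s-1}(f_T(\alpha_1)+z)=f_T^s(\alpha_1)+\Delta^{s-1}(z),
\end{equation*}
and this vanishes once $s-1\ge\nu$. Conversely, suppose $f_T^s(\alpha_1)=f_T^s(\alpha_2)$ in $L[T]$. Then $\PP_f(\alpha_1)=\PP_f(\alpha_2)$, so Theorem \ref{thm11} gives $z:=f(\alpha_2)-f(\alpha_1)\in E_f$. The same identity then gives $\Delta^{s-1}(z)=0$, so $z\in N_f$. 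Since $\alpha_i\sim_f\alpha_j$ means eventual equality of orbits for every $\lambda$, condition (i) lets us hit $\beta$ along the orbit of $\alpha_1$. Concretely, $C_f(\alpha_1,\alpha_2;\beta)\supseteq\{\lambda: f_\lambda^m(\alpha_1)=\beta \text{ for some } m\ge s\}$. This set is infinite: each equation $f_T^m(\alpha_1)=\beta$ is monic of degree $d^{m-1}$ in $T$, and a fixed $\lambda$ can solve only equations with $m$ in a single residue class pattern unless $\beta$ is periodic; the argument is a standard Ghioca--Hsia type counting.

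\textbf{Step 2: sufficiency of (ii).} Assume $b_i\in E_f$. By \eqref{eq4a},
\begin{equation*}
f_\lambda^{j+1}(\alpha_i)=f_\lambda^{j+1}(\beta)+\Delta^j(b_i).
\end{equation*}
For $j\ge\nu$ this equals $f_\lambda^{j+1}(\beta)+\Delta_W^{j}(\pi_f(b_i))$, so we may replace $b_i$ by its $W_f$-component. Given $r,\gamma$ as in (ii), we consider parameters $\lambda$ for which $f_\lambda^{r}(\beta)=\beta+\gamma'$, with $\gamma'\in W_f$ chosen from $\gamma$ so that the affine map $A_{r,\gamma}$ describes how translates $\beta+w$ move under $f_\lambda^r$. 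By \eqref{eq4a} we have $f_\lambda^r(\beta+w)=\beta+\gamma'+\Delta_W^r(w)$. Taking multiples $rk$ in place of $r$ when necessary yields infinitely many such $\lambda$, since these are roots of monic polynomials of growing degree. Along the orbit of $\beta$ the displacement then runs through $\mathcal O_{r,\gamma}(0)$. Since $w_1,w_2$ lie in this orbit, each $\alpha_i$ eventually lands exactly on $\beta$.

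\textbf{Step 3: necessity.} Suppose $C_f$ is infinite and (i) fails. For a fixed pair $(m,n)$ there are only finitely many $\lambda$, because $f_T^m(\alpha_1)-\beta$ is monic in $T$. Hence $m,n\to\infty$ along a sequence $\lambda_\ell$. A lower bound $\hh_\lambda(x)\ge c\,h(\lambda)-C'$ together with $\hh_\lambda(\alpha_1)=d^{-m}\hh_\lambda(\beta)$ bounds $h(\lambda_\ell)$. Consequently $\hh_{\lambda_\ell}(\alpha_i)\to0$. Theorem \ref{thm22}, Theorem \ref{thm32} and Proposition \ref{prop34} then give $f(\alpha_2)-f(\alpha_1)\in E_f$, or $\alpha_1,\alpha_2\in k$.

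The main obstacle is showing $b_i\in E_f$, i.e.\ relating $\beta$ to the $\alpha_i$, since $\beta$ has no small height. We would first attempt the local argument of Proposition \ref{prop34}. At a place $v\in S$ and at the special parameters $\lambda=\alpha_i-f(\alpha_i)$ and $\lambda=\beta-f(\beta)$, we would use the local identities
\begin{equation*}
\hh_{v,\lambda}(\beta)=d^{m}\hh_{v,\lambda}(\alpha_1),
\end{equation*}
passed to a limit measure, to force $|f(\alpha_i)-f(\beta)|_v\le1$ via Lemma \ref{lem31}. We would then iterate $R_c$ and apply Lemma \ref{lem33} to get membership in $E_f$. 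Once $b_i\in E_f$, the collision $f_\lambda^{m}(\alpha_i)=\beta$ translates, via \eqref{eq4a}, into the statement that $\pi_f(b_i)$ lie in a common $\Delta_W$-affine orbit. Extracting $r\le\ell_f$ and $\gamma$ from the infinitely many collisions is a pigeonhole argument on the finite group $W_f$.
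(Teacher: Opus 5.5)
Your treatment of condition (i) agrees with Lemma \ref{lem42}. Your height-bounding step, the lower bound $\hh_\lambda(x)\ge h(\lambda)/d-h(x)$ together with $\hh_\lambda(\alpha_1)=d^{-m}\hh_\lambda(\beta)$, is a workable if less direct substitute for Lemma \ref{lem45}, once the points lie in a common function field (see below).

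The genuine gap is the step you yourself call the main obstacle, $b_i\in E_f$, and the route you sketch would not close it. Lemma \ref{lem31} and Proposition \ref{prop34} need the full identity \eqref{eq2c} for the pair $(\alpha_i,\beta)$ at every $\lambda\in\mathbb C_v$. By contrast, $\hh_{v,\lambda}(\beta)=d^m\hh_{v,\lambda}(\alpha_1)$ holds only at the collision parameters, with $m$ depending on $\lambda$, and no ``limit measure'' turns this into \eqref{eq2c}. The missing idea is that, contrary to your remark, $\beta$ has height exactly zero at every collision parameter:
\begin{enumerate}[label=(\alph*)]
\item Once $e=f(\alpha_2)-f(\alpha_1)\in E_f$, \eqref{eq4d} says the orbits of $\alpha_1,\alpha_2$ under $f_\lambda$ differ by constants, so $\hh_{\lambda_j}(\alpha_1)=\hh_{\lambda_j}(\alpha_2)$.
\item Failure of (i) means $\Delta^{q-1}(e)\ne0$ for all $q$, which forces $m_j\ne n_j$.
\item Hence $d^{-m_j}\hh_{\lambda_j}(\beta)=d^{-n_j}\hh_{\lambda_j}(\beta)$, so $\hh_{\lambda_j}(\beta)=0$ and all three points are preperiodic under $f_{\lambda_j}$.
\item Now Theorem \ref{thm22} applies to $(\alpha_1,\beta)$, and Theorem \ref{thm32} with Proposition \ref{prop34} gives $b_1\in E_f$ and $b_2=b_1+e\in E_f$.
\end{enumerate}

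Three further steps are missing or wrong.

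First, heights and Theorem \ref{thm22} require $\alpha_1,\alpha_2,\beta$ to lie in one finite extension of $k(t)^{\mathrm{perf}}$. A priori $\mathrm{trdeg}_k\, k(\alpha_1,\alpha_2,\beta)$ may be $2$, and your leftover case $\alpha_1,\alpha_2\in k$ is never treated. The paper's Lemma \ref{lem44} handles both issues by an absolute-value argument: if (i) fails, then $\alpha_1\notin k$ and $\alpha_2,\beta\in\overline{k(\alpha_1)}$.

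Second, $(r,\gamma)$ comes from a single collision, not from a pigeonhole over many. Both $F^m$ and $F^n$ fix the coset $\beta+W_f$, hence so does $F^\tau$ for $\tau=\gcd(m,n)$. With $\gamma=F^\tau(\beta)-\beta$ and $r\equiv\tau\pmod{\ell_f}$, you get $A_{r,\gamma}^{m/\tau}(w_1)=A_{r,\gamma}^{n/\tau}(w_2)=0$. Since $A_{r,\gamma}$ permutes $W_f$, this puts $w_1,w_2$ in $\mathcal O_{r,\gamma}(0)$.

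Third, in the sufficiency of (ii), passing from $r$ to multiples $rk$ replaces $\Delta_W^r$ by $\Delta_W^{rk}$, so the affine map changes. You need iterates $\tau\equiv r\pmod{\ell_f}$ hitting the fixed target $f_\lambda^\tau(\beta)=\beta+\gamma$. Also, ``roots of monic polynomials of growing degree'' does not yield infinitely many distinct $\lambda$, since a few roots could recur with growing multiplicity. Ruling this out is the content of Theorem \ref{thm43}, via $\gcd(P_n,P_{n+h})\mid Q_h$. The same argument is what your sufficiency proof for (i) is missing behind ``standard Ghioca--Hsia type counting''.
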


Under the normalization \eqref{eq2a}, the points $\alpha_1,\alpha_2,\beta$ are replaced by $a\alpha_1,a\alpha_2,a\beta$, and
\begin{equation*}
g_{a(T+f(0))}^s(a\alpha_1)-g_{a(T+f(0))}^s(a\alpha_2)=a(f_T^s(\alpha_1)-f_T^s(\alpha_2)).
\end{equation*}
Thus $\alpha_1\sim_f\alpha_2$ if and only if $a\alpha_1\sim_g a\alpha_2$, so (i) is preserved by the normalization.

For (ii), \eqref{eq2a} gives $g(a\alpha_i)-g(a\beta)=ab_i$, and we have
\begin{equation*}
E_g=aE_f,\quad W_g=aW_f,\quad \pi_g(ac)=a\pi_f(c)\quad(c\in E_f),\quad \ell_g=\ell_f.
\end{equation*}
Together with $\Delta_g(ac)=a\Delta_f(c)$, these identities show that $w_i$ is replaced by $aw_i$. Moreover,
\begin{equation*}
(\Delta_g|_{W_g})^r(az)+a\gamma=aA_{r,\gamma}(z)\quad(z,\gamma\in W_f),
\end{equation*}
so multiplication by $a$ maps the corresponding orbits of $0$ onto each other. Thus (ii) is also preserved by the normalization. We may therefore assume that $f$ is monic and $f(0)=0$ in Lemma \ref{lem42} and Subsections \ref{Sub43} and \ref{Sub44}.

\begin{lemma}\label{lem42}
If $a,b\in L$ are not both in $k$, then
\begin{equation*}
a\sim_f b\quad\Leftrightarrow\quad f(b)-f(a)\in N_f.
\end{equation*}
For arbitrary $a,b\in L$, the condition $f(b)-f(a)\in N_f$ implies that
\begin{equation}\label{eq4c}
f_T^n(a)=f_T^n(b)\quad (n\ge\nu+1).
\end{equation}
\end{lemma}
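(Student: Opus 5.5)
We prove the second claim first, since it gives the reverse implication of the equivalence. Suppose $z:=f(b)-f(a)\in N_f\subseteq E_f$. Note that $f_T(b)=f(b)+T=f_T(a)+z$. Applying \eqref{eq4a} with $X=f_T(a)$, which is legitimate because \eqref{eq4a} is an identity in $k[X,T]$ and we may substitute any element of $L[T]$, gives
\begin{equation*}
f_T^{n}(b)=f_T^{n-1}(f_T(a)+z)=f_T^{n}(a)+\Delta^{n-1}(z)\quad(n\ge1).
\end{equation*}
Since $z\in\ker\Delta^\nu$, we get $\Delta^{n-1}(z)=0$ for $n-1\ge\nu$. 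This proves \eqref{eq4c}, and in particular $a\sim_f b$ with $s=\nu+1$.

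For the forward direction, assume that $f_T^s(a)=f_T^s(b)$ for some $s\ge1$, with $a,b$ not both in $k$. Then $f_\lambda^s(a)=f_\lambda^s(b)$ for every $\lambda$. Hence $a$ and $b$ are preperiodic for exactly the same $\lambda$, so $\PP_f(a)=\PP_f(b)$. By Lemma \ref{lem21}, $\Prep(f;a,b)$ is infinite. Theorem \ref{thm11} then gives $z:=f(b)-f(a)\in E_f$, since $a,b$ are not both in $k$. Now apply the displayed formula above, which used only $z\in E_f$:
\begin{equation*}
0=f_T^s(b)-f_T^s(a)=\Delta^{s-1}(z).
\end{equation*}
So $z\in\ker\Delta^{s-1}\subseteq\ker\Delta^{\max(s-1,\nu)}=\ker\Delta^\nu=N_f$, using that the kernels stabilize by exponent $\nu$.

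The only substantive input is the step $z\in E_f$, and it is supplied entirely by Theorem \ref{thm11}. The rest is bookkeeping with \eqref{eq4a}. The one care point is that \eqref{eq4a} is applied with $X$ replaced by $f_T(a)\in L[T]$ rather than an element of $L$; this is allowed because it is a polynomial identity in $X$ and $T$.
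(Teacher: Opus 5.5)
Your proposal is correct and follows essentially the same route as the paper. Both derive $f_T^n(b)-f_T^n(a)=\Delta^{n-1}(f(b)-f(a))$ from \eqref{eq4a} with $X=f_T(a)$. For the forward direction, both obtain $f(b)-f(a)\in E_f$ from Theorem \ref{thm11}, using $\PP_f(a)=\PP_f(b)$ and Lemma \ref{lem21}, and then conclude by stabilization of the kernels of $\Delta^j$.
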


\begin{proof}
Write $e:=f(b)-f(a)$. If $e\in E_f$, then applying \eqref{eq4a} with $X=f_T(a)$ and $z=e$ gives
\begin{equation}\label{eq4d}
f_T^n(b)-f_T^n(a)=\Delta^{n-1}(e)\quad(n\ge1).
\end{equation}
Suppose that $a\sim_f b$. Then $\PP_f(a)=\PP_f(b)$, and this set is infinite by Lemma \ref{lem21}. Since $a,b$ are not both in $k$, Theorem \ref{thm11} gives $e\in E_f$. Equation \eqref{eq4d} implies $\Delta^j(e)=0$ for some $j\ge0$. Since the kernels stabilize by exponent $\nu$, we obtain $e\in N_f$. Conversely, if $e\in N_f$, then \eqref{eq4d} gives \eqref{eq4c}, and thus $a\sim_f b$. This implication holds for arbitrary $a,b\in L$.
\end{proof}

\subsection{Iterates along arithmetic progressions}\label{Sub42}

For any $x,y \in L$ and positive integers $a$ and $b$, we prove that there are infinitely many parameters $\lambda \in L$ such that $f_\lambda^n(x)=y$ for some positive integer $n\equiv a\pmod b$. Asgarli and Ghioca \cite[Theorem 2.4]{AG26} proved the case $f(X)=X^d$ without a restriction on $n$.

\begin{theorem}\label{thm43}
Let $f\in k[X]$ be a polynomial of degree $d\ge2$, and let $x,y\in L$. Then for any positive integers $a$ and $b$, the set
\begin{equation*}
\{\lambda\in L:f_\lambda^n(x)=y
\text{ for some }n\ge1\text{ with }n\equiv a\pmod b\}
\end{equation*}
is infinite.
\end{theorem}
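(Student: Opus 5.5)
The plan is to study the polynomials
\begin{equation*}
\Phi_n(T):=f_T^n(x)-y\in L[T],\qquad n\equiv a\pmod b,
\end{equation*}
whose roots are exactly the parameters in the set of Theorem \ref{thm43}. Each $\Phi_n$ has degree $d^{n-1}$ in $T$ and a nonzero constant leading coefficient. So $\Phi_n$ is nonconstant and has at least one root for every $n\ge1$. Suppose, for contradiction, that the union $F$ of all roots of $\Phi_{a+jb}$ ($j\ge0$) is finite. I will first analyze a single $\lambda_0\in F$. If $\lambda_0$ is a root of both $\Phi_n$ and $\Phi_{n'}$ with $n<n'$, then $f_{\lambda_0}^{n'-n}(y)=y$, so $y$ is periodic under $f_{\lambda_0}$, say with exact period $P_{\lambda_0}$. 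Then the set of admissible $n$ for $\lambda_0$ is either a single integer or a single residue class modulo $P_{\lambda_0}$ intersected with $a+b\mathbb Z$. Since
\begin{equation*}
\sum_{\lambda\in F}\operatorname{mult}_\lambda\Phi_n=d^{n-1}\to\infty,
\end{equation*}
some $\lambda_0\in F$ must occur as a root of $\Phi_n$ with unbounded multiplicity along an arithmetic progression $n=n_0+jP$, where $P$ is a common multiple of $b$ and $P_{\lambda_0}$.

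The second step is to bound the multiplicity of $\lambda_0$ as a root of $\Phi_{n_0+jP}$ uniformly in $j$, which gives the contradiction. Write $T=\lambda_0+\varepsilon$ and expand
\begin{equation*}
\delta_j(\varepsilon):=f_{\lambda_0+\varepsilon}^{n_0+jP}(x)-y\in L[\varepsilon].
\end{equation*}
These satisfy the recursion
\begin{equation*}
\delta_{j+1}=\Psi_\varepsilon(\delta_j),\qquad \Psi_\varepsilon(Z):=f_{\lambda_0+\varepsilon}^{P}(y+Z)-y.
\end{equation*}
Here $\Psi_0$ fixes $0$, because $y$ is $P$-periodic under $f_{\lambda_0}$. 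The key point is that $\Psi_\varepsilon(0)=f^P_{\lambda_0+\varepsilon}(y)-y$ is a nonzero polynomial in $\varepsilon$, since it has degree $d^{P-1}$ with constant leading coefficient. Let $s:=\operatorname{ord}_\varepsilon\Psi_\varepsilon(0)$, which is a fixed finite integer. I will then argue $\varepsilon$-adically, viewing $\Psi_\varepsilon$ as a power series map on $\varepsilon L[[\varepsilon]]$. The claim to aim for is that if $\operatorname{ord}_\varepsilon\delta_j>s$, then $\operatorname{ord}_\varepsilon\delta_{j+1}=s$. This works whenever the linear and higher terms of $\Psi_\varepsilon(Z)-\Psi_\varepsilon(0)$ have $\varepsilon$-order at least the order of $Z$, which holds since the coefficients lie in $L[\varepsilon]$. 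One then shows the orders cannot climb past a fixed bound $s'$ depending only on $s$ and the multiplier of the cycle. The natural case split is by the multiplier $\mu=(f^P_{\lambda_0})'(y)$. If $\mu\ne0$, then $\Psi_\varepsilon$ acts near $0$ like $Z\mapsto\mu Z+\Psi_\varepsilon(0)+\cdots$, and the order of $\delta_j$ can exceed $s$ only on a set of $j$ of bounded consecutive length, with a bounded excess. If $\mu=0$, then $\Psi_\varepsilon(Z)=\Psi_\varepsilon(0)+O(\varepsilon Z)+O(Z^2)$, and $\operatorname{ord}\delta_{j+1}=s$ as soon as $\operatorname{ord}\delta_j>s$.

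The main obstacle I expect is exactly this order bound in characteristic $p$. Inseparability means that $f'$ may vanish identically on large pieces, and the linear term in $Z$ may be replaced by $Z^{p^m}$-type terms. So the bookkeeping has to use Newton polygons of $\Psi_\varepsilon(Z)$ over $L[[\varepsilon]]$ rather than derivatives. What I would try first is to show that the Newton polygon of $\Psi_\varepsilon(Z)$ has a fixed first vertex at $(0,s)$. From this it follows that any $Z$ of order $>s/\min(\text{slopes})$ maps to an element of order exactly $s$, and that from order exactly $s$ one can reach order at most a fixed bound in the next step. This yields $\sup_j\operatorname{mult}_{\lambda_0}\Phi_{n_0+jP}<\infty$. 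The bound holds for each of the finitely many $\lambda_0\in F$, and this contradicts $d^{n-1}\to\infty$. Hence $F$ is infinite, which proves Theorem \ref{thm43}.
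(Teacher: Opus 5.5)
\textbf{Your second step fails.} The bound $\sup_j\operatorname{mult}_{\lambda_0}\Phi_{n_0+jP}<\infty$ is false, and it fails in exactly your case $\mu\ne0$. Your Newton-polygon analysis is purely local at $\lambda_0$ and never uses the finiteness of $F$, so a single counterexample is enough to sink it.

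Take $f(X)=X^p+X$ (the theorem allows additive $f$), $x=y=0$, $a=b=1$ and $\lambda_0=0$. Then $y$ is fixed by $f_{\lambda_0}=f$, so $P=1$, $\Psi_\varepsilon(Z)=Z^p+Z+\varepsilon$, $s=1$ and $\mu=1$. Since $f$ is additive, $f_T^n(0)=\sum_{i=0}^{n-1}f^i(T)$. In the commutative ring of $\F_p$-additive polynomials under composition we have $f=1+\mathrm{Fr}$, where $\mathrm{Fr}(T)=T^p$, and $\mathrm{Fr}\circ\sum_{i<n}(1+\mathrm{Fr})^i=(1+\mathrm{Fr})^n-1$. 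For $n=p^m$ the right-hand side equals $\mathrm{Fr}^{p^m}$, hence
\[
\Phi_{p^m}(T)=f_T^{p^m}(0)=T^{p^{p^m-1}}.
\]
For $p=2$ this reads $f_T^2(0)=T^2$ and $f_T^4(0)=T^8$. More generally $\operatorname{mult}_0\Phi_n=p^{p^{v_p(n)}-1}$, which is unbounded.

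Even your one-step claim fails. With $n_0=1$, the element $\delta_{p^m-2}=\Phi_{p^m-1}(\varepsilon)$ has order exactly $s=1$. Its image $\delta_{p^m-1}=\varepsilon^{p^{p^m-1}}$ has arbitrarily large order. The reason is that $\Psi_\varepsilon$ has a root of order $s$ in $\overline{L((\varepsilon))}$, and the orbit $(\delta_j)$ comes arbitrarily close to it. Nothing in an analysis at one $\lambda_0$ can exclude this.

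\textbf{What your observation does give, and how to finish.} Your $\varepsilon$-adic observation (if $\operatorname{ord}_\varepsilon\delta_j>s$ then $\operatorname{ord}_\varepsilon\delta_{j+1}=s$) is correct. It is only a two-term statement, and its global form is the identity the paper uses. Write $Q_h(T)=f_T^h(y)-y$, which is nonzero of degree $d^{h-1}$. Then $\Phi_{n+h}-Q_h=f_T^h(f_T^n(x))-f_T^h(y)\equiv0\pmod{\Phi_n}$, so
\[
\min\bigl(\operatorname{mult}_\lambda\Phi_n,\operatorname{mult}_\lambda\Phi_{n+h}\bigr)\le\operatorname{mult}_\lambda Q_h\le d^{h-1}\quad\text{for every }\lambda\in L.
\]
This bound still lets one $\lambda_0$ carry unbounded multiplicity along a progression, as the example shows. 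So the contradiction must come from a pigeonhole across indices, not from following one root.

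Suppose $F$ is finite, and choose $n_0\equiv a\pmod b$ with $d^{n_0-1}>|F|\,d^{|F|b-1}$. Each of $\Phi_{n_0},\Phi_{n_0+b},\dots,\Phi_{n_0+|F|b}$ has all its roots in $F$. Its degree exceeds $|F|\,d^{|F|b-1}$, so it has a root of multiplicity greater than $d^{|F|b-1}$. There are $|F|+1$ of these polynomials, so two of them share such a root. This contradicts the displayed bound with $h\le|F|b$. This is the paper's proof, and it needs no periods, multipliers or Newton polygons.
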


\begin{proof}
For every $n \ge 1$, let $P_n(T):=f_T^n(x)-y$ and $Q_n(T):=f_T^n(y)-y$. Then
\begin{equation*}
\deg_T P_n=\deg_T Q_n=d^{n-1}.
\end{equation*}
For every $n,h \ge 1$, a direct computation gives
\begin{equation*}
P_{n+h}(T)-Q_h(T)=f_T^h(f_T^n(x))-f_T^h(y) \equiv0\pmod{P_n(T)}.
\end{equation*}
Thus $\gcd(P_n,P_{n+h})$ divides $Q_h$, which implies that 
\begin{equation} \label{eq4b}
\deg_T\gcd(P_n,P_{n+h})\le d^{h-1}.
\end{equation}

Suppose that the set of parameters in the statement is a finite set $S$ of size $s$. Since $L$ is algebraically closed and $P_a$ is nonconstant, there exists $\lambda \in L$ such that $P_a(\lambda)=0$ so $s\ge1$. Set $D:=d^{sb-1}$, and choose $n_0\ge1$ with $n_0\equiv a\pmod b$ and $d^{n_0-1}>sD$. Each of the $s+1$ polynomials
\begin{equation*}
P_{n_0}(T), P_{n_0+b}(T), \ldots, P_{n_0+sb}(T)
\end{equation*}
has degree greater than $sD$ and all its roots in $S$. Hence each has a root of multiplicity greater than $D$. By the pigeonhole principle, there exists $\lambda\in S$ which has multiplicity greater than $D$ in both $P_{n_0+ib}$ and $P_{n_0+jb}$ for some indices $0\le i<j\le s$. This contradicts \eqref{eq4b}, which gives
\begin{equation*}
\deg_T\gcd(P_{n_0+ib},P_{n_0+jb}) \le d^{(j-i)b-1}\le D. \qedhere
\end{equation*}
\end{proof}

We remark that Example \ref{exmp12} also provides a counterexample to \cite[Conjecture 1.7]{LN25} for general non-additive polynomials. Indeed, let $p,f,t,L$ be as in Example \ref{exmp12}. By Theorem \ref{thm43}, there are infinitely many $\lambda\in L$ for which $f_\lambda^\tau(t)=t+1$ for some $\tau\ge1$. Since $f_\lambda(X+1)=f_\lambda(X)+1$, we have
\begin{equation*}
f_\lambda^\tau(t+1)=t \quad \text{and} \quad f_\lambda^{2\tau}(t)=t.
\end{equation*}
Hence the set $C_f(t,t+1;t)$ is infinite, although $f(t) \ne f(t+1)$.

\subsection{Field reduction and small heights} \label{Sub43}
In this subsection, we give two lemmas which play important roles in proving Theorem \ref{thm41}.

\begin{lemma}\label{lem44}
Suppose that $\alpha_1,\alpha_2,\beta\in L$ are not all in $k$ and that $C_f(\alpha_1,\alpha_2;\beta)$ is infinite. Then either $\alpha_1\sim_f\alpha_2$, or both $\alpha_1\notin k$ and $\alpha_2,\beta\in\overline{k(\alpha_1)}$, where the algebraic closure is taken inside $L$.
\end{lemma}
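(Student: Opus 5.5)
Assume that $\alpha_1\not\sim_f\alpha_2$. The plan is to show that the parameter set is algebraic over $k(\alpha_1)$. The starting point is that every $\lambda\in C_f(\alpha_1,\alpha_2;\beta)$ satisfies
\begin{equation*}
f_\lambda^m(\alpha_1)=\beta=f_\lambda^n(\alpha_2)
\end{equation*}
for some $m,n\ge1$. Hence $\lambda$ is a root of the two polynomials $f_T^m(\alpha_1)-\beta$ and $f_T^n(\alpha_2)-\beta$. Both are monic in $T$ of degrees $d^{m-1}$ and $d^{n-1}$, since $f$ is monic.

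\textbf{Step 1: $\alpha_1,\alpha_2,\beta$ are algebraic over one another.} From the first relation, $\beta$ is algebraic over $k(\alpha_1,\lambda)$ and $\lambda$ is algebraic over $k(\alpha_1,\beta)$. Fix one $\lambda_0$ in the infinite set.
\begin{itemize}
\item[(a)] If $\alpha_1,\alpha_2,\beta$ were all algebraic over $k$, they would lie in $k$, contrary to hypothesis. So $\mathrm{trdeg}_k k(\alpha_1,\alpha_2,\beta)\ge1$.
\item[(b)] Suppose $\mathrm{trdeg}_k k(\alpha_1,\alpha_2,\beta)\ge2$. I would compare the two relations to eliminate $\lambda$: the resultant in $T$ of $f_T^m(\alpha_1)-\beta$ and $f_T^n(\alpha_2)-\beta$ vanishes. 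This gives a nontrivial algebraic relation among $\alpha_1,\alpha_2,\beta$ for each pair $(m,n)$, but a single relation only yields $\mathrm{trdeg}\le2$.
\item[(c)] To do better, use infinitely many distinct $\lambda$. Each such $\lambda$ is algebraic over $F:=k(\alpha_1,\alpha_2,\beta)$. If $\mathrm{trdeg}\,F\ge2$, specialize $\beta$ and $\alpha_2$ generically over $k(\alpha_1)$. The condition "$\beta$ lies in the forward $f_\lambda$-orbit of both $\alpha_1$ and $\alpha_2$" then defines, for each $(m,n)$, a finite set of $\lambda$ whose existence forces a relation among the three points, namely the vanishing resultant $\mathrm{Res}_{m,n}$.
\item[(d)] So each $(m,n)$ that occurs gives a polynomial relation $\mathrm{Res}_{m,n}(\alpha_1,\alpha_2,\beta)=0$.
\end{itemize}

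\textbf{Step 2: the main obstacle.} The obstacle is to show that, when $\alpha_1\not\sim_f\alpha_2$, infinitely many distinct pairs $(m,n)$ force the relations $\mathrm{Res}_{m,n}$ to cut out a curve. Equivalently, one must show that two such relations are independent. Otherwise the relations could share a common component. I would try to identify that common component with the locus where $f_T^s(\alpha_1)=f_T^s(\alpha_2)$ identically, that is, with $\alpha_1\sim_f\alpha_2$. This is the step I expect to require the most care, for instance through a degree or height argument as in Theorem \ref{thm43}.

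\textbf{Step 3: algebraicity over $k(\alpha_1)$.} Once $\mathrm{trdeg}\,F=1$, it remains to see that $\alpha_1\notin k$. If $\alpha_1\in k$, then every $\lambda$ with $f_\lambda^m(\alpha_1)=\beta$ is algebraic over $k(\beta)$. Choose two parameters $\lambda\ne\lambda'$ and iterate the equation $f_\lambda^m(\alpha_1)=\beta$ from the constant $\alpha_1$. A height comparison via Lemma \ref{lem24} at a place $v$ with $|\beta|_v>1$ would then force $\alpha_1\sim_f\alpha_2$. In that case we land in the first alternative, and otherwise $\alpha_1\notin k$, so $\alpha_2,\beta\in\overline{k(\alpha_1)}$.
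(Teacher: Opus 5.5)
Your proposal has a genuine gap. Step 2 carries all the content of the lemma, and you leave it open; the resultant framework of Step 1 gives no way to close it. Each identity $\mathrm{Res}_{m,n}(\alpha_1,\alpha_2,\beta)=0$ is a single hypersurface condition on one fixed point, and you have not even checked that these resultants are nonzero polynomials. To reach $\mathrm{trdeg}_k k(\alpha_1,\alpha_2,\beta)\le1$ you would need the infinitely many hypersurfaces $\mathrm{Res}_{m,n}=0$ that occur to have no common two-dimensional component outside the $\sim_f$-locus. That is an unlikely-intersection statement at least as hard as the lemma itself, and ``specialize $\beta$ and $\alpha_2$ generically over $k(\alpha_1)$'' has no meaning when the three points are fixed. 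Step 3 is also only a gesture. When $\alpha_1\in k$ you must first show $\alpha_2\in k$; only then can a place with $|\beta|_v>1$ force $\alpha_1\sim_f\alpha_2$. Comparing two parameters $\lambda\ne\lambda'$ does not accomplish this.

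The missing idea is a direct growth comparison at a nonarchimedean absolute value, with $f$ monic over $k$. It uses only that $m,n$ can be taken arbitrarily large, since for fixed $j$ each equation $f_T^j(\alpha_i)=\beta$ has finitely many roots.

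Let $M=\overline{k(\alpha_2)}$, and suppose $\alpha_1\notin M$. Take $|\cdot|$ on $L$ trivial on $M$ with $C:=|\alpha_1|>1$, and choose $\lambda$, $m$, $n$ with $|\beta|<C^{d^{\min(m,n)}}$. If $|\lambda|<C^d$, then $|f_\lambda^m(\alpha_1)|=C^{d^m}$. If $|\lambda|\ge C^d$, then $|\alpha_2|\le1$ gives $|f_\lambda^n(\alpha_2)|=|\lambda|^{d^{n-1}}\ge C^{d^n}$. Both contradict the choice, so $\alpha_1\in M$. By symmetry $\overline{k(\alpha_1)}=M$; in particular, $\alpha_1\in k$ forces $\alpha_2\in k$.

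Now suppose $\beta\notin M$, and take $|\cdot|$ trivial on $M$ with $|\beta|>1$. This forces $|\lambda|>1$, hence $|\beta|=|\lambda|^{d^{m-1}}=|\lambda|^{d^{n-1}}$ and $m=n$. Since $\lambda$ is then transcendental over $M$, the polynomial $f_T^m(\alpha_1)-f_T^m(\alpha_2)\in M[T]$ vanishes identically, i.e.\ $\alpha_1\sim_f\alpha_2$. Otherwise $\beta\in M$, and $\alpha_1\notin k$ because the three points are not all constants.

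This is the paper's argument, and it needs neither resultants nor transcendence-degree bookkeeping.
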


\begin{proof}
For each $j\ge1$ and $i=1,2$, the equation $f_T^j(\alpha_i)=\beta$ has finitely many roots. Since $C_f(\alpha_1,\alpha_2;\beta)$ is infinite, for every integer $N\ge1$ there exist $\lambda\in L$ and integers $m,n\ge N$ such that 
\begin{equation*}
f_\lambda^m(\alpha_1)=f_\lambda^n(\alpha_2)=\beta.
\end{equation*}
Let $M$ be the algebraic closure of $k(\alpha_2)$ inside $L$. If $\alpha_1\notin M$, choose a nonarchimedean absolute value $| \cdot |$ on $L$ which is trivial on $M$ and satisfies $C:=|\alpha_1|>1$. Choose $\lambda \in L$ and positive integers $m$ and $n$ such that $f_\lambda^m(\alpha_1)=f_\lambda^n(\alpha_2)=\beta$ and $|\beta|<C^{d^{\min(m,n)}}$. This follows from the assumption that $C_f(\alpha_1,\alpha_2;\beta)$ is infinite.

If $|\lambda|<C^d$, then $|f_\lambda^m(\alpha_1)|=C^{d^m}$. If $|\lambda|\ge C^d$, then $|\alpha_2|\le1$ gives
\begin{equation*}
|f_\lambda^n(\alpha_2)|=|\lambda|^{d^{n-1}}\ge C^{d^n}.
\end{equation*}
Both cases contradict the bound $|\beta|<C^{d^{\min(m,n)}}$, so we conclude that $\alpha_1 \in M$. Interchanging $\alpha_1$ and $\alpha_2$ gives $M=\overline{k(\alpha_2)}=\overline{k(\alpha_1)}$.

If $\beta\notin M$, choose a nonarchimedean absolute value $| \cdot |$ on $L$ which is trivial on $M$ and satisfies $|\beta|>1$. The relation $f_\lambda^m(\alpha_1)=f_\lambda^n(\alpha_2)=\beta$ gives $|\lambda|>1$, so 
\begin{equation*}
|\beta|=|\lambda|^{d^{m-1}}=|\lambda|^{d^{n-1}}
\end{equation*}
and thus $m=n$. Moreover, $\lambda$ is transcendental over $M$, so $f_T^m(\alpha_1)=f_T^m(\alpha_2)$ in $L[T]$, i.e., $\alpha_1 \sim_f \alpha_2$. If $\beta \in M$, then $\alpha_2,\beta \in \overline{k(\alpha_1)}$. In this case, $\alpha_1\notin k$, since otherwise $\alpha_1,\alpha_2,\beta\in k$.
\end{proof}

\begin{lemma}\label{lem45}
Let $x,y,\lambda\in\overline{k(t)}$. If $f_\lambda^m(x)=y$ with $m\ge1$, then
\begin{equation*}
\hh_\lambda(x)\le\frac{h(x)+h(y)}{d^m}.
\end{equation*}
\end{lemma}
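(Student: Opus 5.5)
The plan is to combine the functional equation $\hh_\lambda(f_\lambda^m(x))=d^m\hh_\lambda(x)$ with a comparison between the canonical height $\hh_\lambda$ and the naive height $h$. Applied to $y=f_\lambda^m(x)$, the functional equation gives
\begin{equation*}
\hh_\lambda(x)=d^{-m}\hh_\lambda(y).
\end{equation*}
It therefore suffices to prove $\hh_\lambda(y)\le h(x)+h(y)$. Note that this bound depends on $x$ as well as $y$: without it, $\hh_\lambda(y)$ could be large compared with $h(y)$ when $\lambda$ is large. The relation $f_\lambda^m(x)=y$ is exactly what should control $\lambda$.

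The argument is carried out place by place. Choose a finite extension $K$ of $k(t)^{\mathrm{perf}}$ containing $x,y,\lambda$. Then $\hh_\lambda(y)=\sum_v\hh_{v,\lambda}(y)$ and $h(x)+h(y)=\sum_v(\log^+|x|_v+\log^+|y|_v)$, so it suffices to prove the local inequality
\begin{equation*}
\hh_{v,\lambda}(y)\le\log^+|x|_v+\log^+|y|_v
\end{equation*}
at every place $v$. Recall that $f$ is monic with constant coefficients, and the estimates of \cite[Lemma 2.6]{GH26} are available. Set $C:=\max(1,|x|_v,|y|_v)$. We split into two cases according to the size of $|\lambda|_v$.

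In the first case, $|\lambda|_v\le C^d$. Induction gives $|f_\lambda^j(y)|_v\le C^{d^j}$ for all $j$. This is the same computation as in the last part of the proof of Lemma \ref{lem24}: the point is that $|f(z)|_v\le\max(1,|z|_v)^d$ because the coefficients are constants, and the ultrametric inequality then absorbs $\lambda$. Hence $\hh_{v,\lambda}(y)\le\log C\le\log^+|x|_v+\log^+|y|_v$.

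In the second case, $|\lambda|_v>C^d\ge\max(1,|x|_v^d)$. Then \cite[Lemma 2.6(iii)]{GH26} (used in the proof of Lemma \ref{lem24}) gives $|f_\lambda^m(x)|_v=|\lambda|_v^{d^{m-1}}$. This is $>C\ge|y|_v$ since $|\lambda|_v>1$ and $m\ge1$. It contradicts $f_\lambda^m(x)=y$, so this case cannot occur.

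Summing the local bounds over $v$ gives the claim for the points of $K$. For general $x,y,\lambda\in\overline{k(t)}$, enlarge $K$ so that it contains all three; the normalized heights are independent of this choice. The main subtlety I expect is bookkeeping rather than mathematics. One must make sure that $\hh_\lambda=\sum_v\hh_{v,\lambda}$ holds over the chosen $K$ (it is stated for $x,\lambda\in K$). One must also check that $h(x)=\sum_v\log^+|x|_v$ once $x\in K$, which follows from the Galois-averaged definition.
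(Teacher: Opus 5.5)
Your proof is correct and is essentially the paper's argument. The paper bounds $\hh_{v,\lambda}(x)\le d^{-m}\log\max(1,|x|_v,|y|_v)$ at each place by citing Lemma~\ref{lem24} with $n=m$ and then sums over $v$; you re-derive the needed direction of that lemma through the same case split on $|\lambda|_v$, and you apply the functional equation globally rather than locally, which is only an organizational difference.
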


\begin{proof}
Choose a finite extension $K/k(t)^{\mathrm{perf}}$ containing $x,y,\lambda$, with the normalization of Subsection \ref{Sub21}. For each $v \in \Omega_K$, set $D_v:=\max(1,|x|_v,|y|_v)$. Since $|f_\lambda^m(x)|_v=|y|_v\le D_v$, Lemma \ref{lem24} gives
\begin{equation*}
\hh_{v,\lambda}(x)
\le d^{-m}\log D_v
\le d^{-m}(\log^+|x|_v+\log^+|y|_v).
\end{equation*}
Summing over $v\in\Omega_K$, we obtain
\begin{equation*}
\hh_\lambda(x)
=\sum_{v\in\Omega_K}\hh_{v,\lambda}(x)
\le d^{-m}\sum_{v\in\Omega_K}(\log^+|x|_v+\log^+|y|_v)
=\frac{h(x)+h(y)}{d^m}.\qedhere
\end{equation*}
\end{proof}

\subsection{Proof of the collision criterion} \label{Sub44}
 
\begin{proof}[Proof of Theorem \ref{thm41}]
Assume that $C_f(\alpha_1,\alpha_2;\beta)$ is infinite and that (i) does not hold. By Lemma \ref{lem44}, we have $\alpha_1\notin k$ and $\alpha_2,\beta\in\overline{k(\alpha_1)}$. Set $t=\alpha_1$, and choose a finite extension $K/k(t)^{\mathrm{perf}}$ containing $\alpha_1,\alpha_2,\beta$. Every $\lambda\in C_f(\alpha_1,\alpha_2;\beta)$ belongs to $\overline K = \overline{k(t)}$, since it is a root of $f_T^m(\alpha_1)-\beta$ for some $m\ge1$. For each $q\ge1$ and $i=1,2$, the equation $f_T^q(\alpha_i)=\beta$ has finitely many roots. Thus we may choose a sequence $(\lambda_j)_{j\ge1}$ of pairwise distinct elements of $C_f(\alpha_1,\alpha_2;\beta)$ and integers $m_j,n_j\ge j+\nu+1$ such that
\begin{equation*}
f_{\lambda_j}^{m_j}(\alpha_1)=f_{\lambda_j}^{n_j}(\alpha_2)=\beta
\end{equation*}
for each $j \ge 1$. Lemma \ref{lem45} gives
\begin{equation*}
\hh_{\lambda_j}(\alpha_1)
\le\frac{h(\alpha_1)+h(\beta)}{d^{m_j}}\to0, \quad 
\hh_{\lambda_j}(\alpha_2)
\le\frac{h(\alpha_2)+h(\beta)}{d^{n_j}}\to0.
\end{equation*}
Thus Theorem \ref{thm22} gives \eqref{eq2c} for $(\alpha_1, \alpha_2)$, so Theorem \ref{thm32} and Proposition \ref{prop34} imply
\begin{equation*}
e:=f(\alpha_2)-f(\alpha_1)\in E_f.
\end{equation*}
By \eqref{eq4a} and the assumption that (i) does not hold, we have
\begin{equation*}
f_T^q(\alpha_2)-f_T^q(\alpha_1)=\Delta^{q-1}(e)\ne0
\end{equation*}
for every $q \ge 1$. In particular, $m_j\ne n_j$ for every $j$.

By \eqref{eq4d} and the equation $\hh_\lambda(f_\lambda^m(x))=d^m\hh_\lambda(x)$, we have
\begin{equation*}
d^{-m_j}\hh_{\lambda_j}(\beta)
=\hh_{\lambda_j}(\alpha_1)
=\hh_{\lambda_j}(\alpha_2)
=d^{-n_j}\hh_{\lambda_j}(\beta)
\end{equation*}
for every $j \ge 1$. Since $m_j\ne n_j$, all three global canonical heights vanish for every $j\ge1$. 
Thus Theorem \ref{thm22} gives \eqref{eq2c} for $(\alpha_1, \beta)$, so Theorem \ref{thm32} and Proposition \ref{prop34} imply
\begin{equation*}
b_1=f(\alpha_1)-f(\beta)\in E_f,\quad b_2=b_1+e\in E_f.
\end{equation*}
Recall that $w_i:=\Delta_W^{-1}(\pi_f(b_i)) \in W_f$. Since $\Delta^\nu(b_i-\pi_f(b_i))=0$, \eqref{eq4a} and \eqref{eq4d} give
\begin{equation}\label{eq4f}
f_\lambda^q(\alpha_i)
=f_\lambda^q(\beta)+\Delta_W^q(w_i)
=f_\lambda^q(\beta+w_i)
\end{equation}
for every $\lambda \in L$, $q \ge \nu+1$ and $i=1,2$.

Fix $j\ge1$, and set $F=f_{\lambda_j}$, $m=m_j$, $n=n_j$ and $Y_\beta:=\beta+W_f$. Since $m,n\ge\nu+1$, \eqref{eq4f} gives
\begin{equation*}
F^m(\beta+w_1)=F^n(\beta+w_2)=\beta.
\end{equation*}
Since $w_1,w_2\in W_f$ and $\Delta_W$ is bijective,
\eqref{eq4a} gives
\begin{equation*}
F^m(Y_\beta)
=F^m(\beta+w_1)+\Delta_W^m(W_f)
=\beta+W_f=Y_\beta.
\end{equation*}
Similarly, $F^n(Y_\beta)=Y_\beta$. By the Euclidean algorithm, $F^\tau(Y_\beta)=Y_\beta$ for $\tau:=\gcd(m,n)$, so $\gamma:=F^\tau(\beta)-\beta\in W_f$. Choose $1\le r\le\ell_f$ such that $\tau\equiv r\pmod{\ell_f}$. 

By \eqref{eq4a},
\begin{equation*}
F^\tau(\beta+z)=\beta+\gamma+\Delta_W^r(z)
=\beta+A_{r,\gamma}(z)\quad(z\in W_f).
\end{equation*}
By induction on $s$, we obtain
\begin{equation}\label{eq4g}
F^{\tau s}(\beta+z)=\beta+A_{r,\gamma}^s(z)
\quad(z\in W_f,\ s\ge0),
\end{equation}
which gives $A_{r,\gamma}^{m/\tau}(w_1)=0$ and $A_{r,\gamma}^{n/\tau}(w_2)=0$. Since $A_{r,\gamma}$ is a permutation of $W_f$, we have $w_1,w_2\in\mathcal O_{r,\gamma}(0)$. This proves (ii).

Now we prove the converse. Suppose that (i) holds, and choose $s\ge1$ such that $f_T^s(\alpha_1)=f_T^s(\alpha_2)$. By Theorem \ref{thm43} with $a=b=s$, there are infinitely many $\lambda \in L$ such that $f_\lambda^n(\alpha_1)=\beta$ for some positive multiple $n$ of $s$. Since $n\ge s$, we also have $f_\lambda^n(\alpha_2)=\beta$. Hence $C_f(\alpha_1,\alpha_2;\beta)$ is infinite.

Suppose now that (ii) holds, and choose $r$ and $\gamma$ as in the statement. Since $w_i\in\mathcal O_{r,\gamma}(0)$ and $A_{r,\gamma}$ is a permutation of $W_f$, we may choose $s_1,s_2\ge\nu+1$ such that $A_{r,\gamma}^{s_i}(w_i)=0$ for $i=1,2$. By Theorem \ref{thm43}, there are infinitely many $\lambda\in L$ such that
\begin{equation*}
f_\lambda^\tau(\beta)=\beta+\gamma\quad\text{for some }\tau\ge1\text{ with }\tau\equiv r\pmod{\ell_f}.
\end{equation*}
For each such $\lambda$, $\tau s_i\ge\nu+1$ so the same induction as in \eqref{eq4g}, together with \eqref{eq4f}, gives
\begin{equation*}
f_\lambda^{\tau s_i}(\alpha_i)
=f_\lambda^{\tau s_i}(\beta+w_i)
=\beta+A_{r,\gamma}^{s_i}(w_i)
=\beta\quad(i=1,2).
\end{equation*}
Hence $C_f(\alpha_1,\alpha_2;\beta)$ is infinite.

Finally, if $\alpha_1,\alpha_2$ are not both in $k$, the equivalence of (i) with the condition $f(\alpha_2)-f(\alpha_1)\in N_f$ follows from Lemma \ref{lem42}.
\end{proof}

\begin{rem}\label{rmk46}
For any non-additive binomial $f(X)=c_1X^{d_1}+c_2X^{d_2}\in k[X]$ with $0<d_1<d_2$, write $d_i=q_is_i$ and $A_i=d_i-q_i$. Then $G_f=\{0\}$. Indeed, let $0 \ne c \in k$. If $A_1\ne A_2$, then $\deg_X D_cf=\max(A_1,A_2)>0$, so $D_cf\notin k$. If $A_1=A_2>0$, then $q_1<q_2$, and the coefficient of $X^{q_1}$ in $D_cf$ is $c_1s_1c^{A_1}\ne0$. Hence $D_cf\notin k$ in either case.

Consequently, $E_f=N_f=W_f=\{0\}$. Thus Theorem \ref{thm11} recovers \cite[Theorem 1.4]{LN25} and strengthens \cite[Theorem 1.5]{LN25}. For the collision result, if $\alpha_1,\alpha_2$ are not both in $k$, Lemma \ref{lem42} gives $\alpha_1\sim_f\alpha_2$ if and only if $f(\alpha_1)=f(\alpha_2)$. The remaining case $\alpha_1,\alpha_2\in k$ and $\beta\notin k$ in \cite[Theorem 1.8]{LN25} is handled by \cite[Lemma 5.3]{LN25}. Hence Theorem \ref{thm41} recovers \cite[Theorem 1.8]{LN25}.
\end{rem}

\bigskip
\section*{Acknowledgments}
Jungin Lee was supported by the National Research Foundation of Korea (NRF) grant funded by the Korea government (MSIT) (No. RS-2024-00334558 and No. RS-2025-02262988).
GyeongHyeon Nam was supported by a KIAS Individual Grant (HP111501) at Korea Institute for Advanced Study.

\bigskip
\section*{Statement on AI use}

Building on research questions posed by the authors, OpenAI's ChatGPT 6 Pro generated the initial proofs of Theorems \ref{thm11} and \ref{thm41} through an iterative dialogue with the authors.
The authors formulated the project, evaluated and revised the model's suggestions, independently verified all mathematical arguments, and wrote the final manuscript. The authors take full responsibility for the correctness of the results.


\end{document}